\setlist{itemsep=0pt}
\newcommand{\D}{\mathop{}\!\mathrm{d}}
\renewcommand{\restriction}{\raise-.5ex\hbox{\ensuremath{\upharpoonright}}}
\definecolor{webgreen}{rgb}{0,.5,0}
\definecolor{webbrown}{rgb}{.6,0,0}
\definecolor{myblue}{rgb}{0,0.25,0.5}
\newcommand{\email}[1]{\href{mailto:#1}{\texttt{#1}}}
\newcommand{\pagerefstar}{\@pagerefstar}
\crefname{section}{section}{sections}
\crefname{subsection}{subsection}{subsections}
\Crefname{figure}{Figure}{Figures}
\crefname{chapter}{Chapter}{Chapters}
\crefname{appendix}{Appendix}{Appendices}
\crefname{subappendix}{Section}{Sections}
\Crefname{subappendix}{Section}{Sections}
\crefname{page}{page}{pages}
\crefname{section}{Section}{Sections}
\Crefname{section}{Section}{Sections}
\crefname{subsection}{Section}{Sections}
\Crefname{subsection}{Section}{Sections}
\crefname{remark}{Remark}{Remarks}
\crefname{footnote}{Footnote}{Footnotes}
\theoremstyle{plain}
\newtheorem{theorem}{Theorem}[section]
\newtheorem{lemma}[theorem]{Lemma}
\newtheorem{proposition}[theorem]{Proposition}
\newtheorem{corollary}[theorem]{Corollary}
\theoremstyle{definition}
\theoremstyle{remark}
\newenvironment{remark}
  {\pushQED{\qed}\remarkx}
  {\popQED\endremarkx}
\newcommand{\refcheckize}[1]{%
  \expandafter\let\csname @@\string#1\endcsname#1%
  \expandafter\DeclareRobustCommand\csname relax\string#1\endcsname[1]{%
    \csname @@\string#1\endcsname{##1}\@for\@temp:=##1\do{\wrtusdrf{\@temp}\wrtusdrf{{\@temp}}}}%
  \expandafter\let\expandafter#1\csname relax\string#1\endcsname
}
\newcommand{\refcheckizetwo}[1]{%
  \expandafter\let\csname @@\string#1\endcsname#1%
  \expandafter\DeclareRobustCommand\csname relax\string#1\endcsname[2]{%
    \csname @@\string#1\endcsname{##1}{##2}\wrtusdrf{##1}\wrtusdrf{{##1}}\wrtusdrf{##2}\wrtusdrf{{##2}}}%
  \expandafter\let\expandafter#1\csname relax\string#1\endcsname
}
\renewcommand{\restriction}{\raise-.5ex\hbox{\ensuremath{\upharpoonright}}}
\DeclareMathOperator*{\esssup}{ess\,sup}
\DeclarePairedDelimiter{\abs}{\lvert}{\rvert}
\DeclarePairedDelimiter{\norm}{\lVert}{\rVert}
\DeclarePairedDelimiter{\kernorm}{\lvert\mkern-2mu\lvert\mkern-2mu\lvert}{\rvert\mkern-2mu\rvert\mkern-2mu\rvert}
\newlist{H-hyp}{enumerate}{1}
\setlist[H-hyp]{align=left,leftmargin=*,topsep=1ex,itemsep=0.5ex,label=(H\arabic*),labelindent=1.5ex}
\crefname{H-hypi}{hypothesis}{hypotheses}
\Crefname{H-hypi}{Hypothesis}{Hypotheses}
\newlist{C-hyp}{enumerate}{1}
\setlist[C-hyp]{align=left,leftmargin=*,topsep=1ex,itemsep=0.5ex,label=(C\arabic*),labelindent=1.5ex}
\crefname{C-hypi}{condition}{conditions}
\Crefname{C-hypi}{Condition}{Conditions}
\newlist{statement}{enumerate}{1}
\setlist[statement]{label=\arabic*.,ref=\arabic*}
\crefname{statementi}{part}{parts}
\Crefname{statementi}{Part}{Parts}
\newcommand{\mytitle}{Lyapunov exponents of renewal equations: numerical approximation and convergence analysis}
\title{\mytitle}
\author{Dimitri Breda, Davide Liessi}
\newcommand{\mydate}{29 March 2024}
\begin{document}

\thispagestyle{empty}

\begin{center}
\LARGE
\mytitle

\bigskip
\large
Dimitri Breda\footnote{\email{dimitri.breda@uniud.it}},
Davide Liessi\footnote{\email{davide.liessi@uniud.it}}

\medskip
\small
CDLab -- Computational Dynamics Laboratory \\
Department of Mathematics, Computer Science and Physics, University of Udine \\
Via delle Scienze 206, 33100 Udine, Italy

\medskip
\large
\mydate
\end{center}

\begin{abstract}
We propose a numerical method for computing the Lyapunov exponents of renewal equations (delay equations of Volterra type), consisting first in applying a discrete QR technique to the associated evolution family suitably posed on a Hilbert state space and second in reducing to finite dimension each evolution operator in the obtained time sequence.
The reduction to finite dimension relies on Fourier projection in the state space and on pseudospectral collocation in the forward time step.
A rigorous proof of convergence of both the discretized operators and the approximated exponents is provided.
A MATLAB implementation is also included for completeness.

\smallskip
\noindent \textbf{Keywords:}
renewal equations;
Lyapunov exponents;
discrete QR method;
evolution family;
Fourier projection;
pseudospectral collocation;
population dynamics.

\smallskip
\noindent \textbf{Mathematical Subject Classification (2020):}
37M25; 
45D05; 
47D99; 
65R20. 
\end{abstract}

\renewcommand{\thefootnote}{\arabic{footnote}}

\section{Introduction}

In \cite{BredaLiessi2024} we presented a method to compute the Lyapunov exponents (LEs) or renewal equations (REs), i.e., delay equations of Volterra type, where the rule for extension towards the future based on the known past \cite{DiekmannVerduynLunel2021} prescribes the value of the unknown function itself, instead of the value of the derivative as for delay differential equations (DDEs).
The interest in LEs is motivated by their role in measuring the asymptotic exponential behavior of solutions, which leads to their application in several aspects of the study of dynamics, such as the average asymptotic stability, the insurgence of chaos and the effects of perturbations (see, e.g., \cite{Adrianova1995,DieciVanVleck2002}).

Following the ideas of \cite{BredaDellaSchiava2018,BredaDiekmannGyllenbergScarabelVermiglio2016,ScarabelDiekmannVermiglio2021}, the method of \cite{BredaLiessi2024} was based on the reformulation of the delay equation as an abstract differential equation, discretized via a pseudospectral collocation, yielding an ordinary differential equation (ODE) whose LEs were computed with the standard discrete QR (DQR) method \cite{DieciVanVleck2004,DieciJollyVanVleck2011}.
The resulting method was experimentally effective, exhibiting the expected convergence properties with respect to the final time of the DQR method and providing results compatible with known properties of the test examples.
However, we could not provide a convergence proof for the method, due to the lack of a proof of convergence of the pseudospectral discretization approach for general nonlinear delay equations.

In view of providing a method to compute the LEs of REs with a systematic convergence analysis, this work follows instead the ideas of \cite{BredaVanVleck2014} for DDEs, which adopts a more rigorous approach treating the DDE directly.
There, the equation is posed in a Hilbert space, the associated evolution family is discretized and the DQR method is adapted to the resulting finite-dimensional approximations of the operators.
Moreover, in order to prove the convergence of the method, the DQR method is lifted to infinite dimension and compared with the finite-dimensional approximation.

Since the experimental results obtained with the proposed method are very similar to the ones obtained in \cite{BredaLiessi2024}, in this work we focus on the theoretical foundations of the approach for linear REs%
\footnote{As common in the context of computing LEs, the nonlinear case is treated via linearization along reference trajectories.
As far as REs are concerned, see \cite{BredaLiessi2024} and the comments therein.}
and on the proofs of convergence of the approximations of the evolution operators, on the one hand, and of the LEs, on the other hand.

There are a few fundamental differences with respect to \cite{BredaVanVleck2014}.
First, for the formulation and discretization of the evolution operators we use the pseudospectral approach of \cite{BredaLiessi2018}, which is more versatile in terms of adaptability to other kinds of equations.
Second, the reformulation of REs on an $L^2$ state space (instead of the classical $L^1$ \cite{DiekmannGettoGyllenberg2008}) is less trivial than that of DDEs, since in the latter case the state space is enlarged (from the space of continuous functions), while in the former it is restricted.

As for the convergence of the approximated evolution operators, the proof differs from \cite{BredaLiessi2018} both because of the different state space and of the different goal of the proof: indeed, here we are not interested in the convergence of the spectra of the operators, while the norm convergence of the operators is more important.
The convergence proof of the LEs from \cite{BredaVanVleck2014}, following that of the operators, can instead be mostly preserved.

Finally, in order to prove the (eventual) compactness of the evolution family -- which is necessary to ensure existence of the LEs -- we use the ideas of \cite{BredaLiessi2021}, also based on \cite{GripenbergLondenStaffans1990}, in the new $L^2$ setting.

The overall outcome is a novel numerical method to compute LEs of REs, as effective as the one of \cite{BredaLiessi2024}, but whose convergence is rigorously proved.
To the best of the authors' knowledge, there are no other tools to compute LEs of REs, despite the recognized role of the latter as key ingredients in models of population dynamics (see \cite{DiekmannGettoGyllenberg2008} and the references therein).

The work is structured as follows.
In \cref{sec:re-l2} we formulate the REs on the $L^2$ state space and we prove the existence and uniqueness of the associated initial value problem.
Then, in \cref{sec:evop} we define the family of evolution operators and prove their (eventual) compactness, we define their approximations via pseudospectral discretization and we prove that such approximations are well defined and converge in norm.
In \cref{sec:le} we define the LEs for REs, we describe their approximation both in the infinite- and finite-dimensional setting and we present the proof of their convergence to (part of) the exact LEs.
Finally, in \cref{sec:num} we describe the implementation and provide some results on an example RE, before presenting a few concluding remarks in \cref{sec:concl}.
The MATLAB codes implementing the method and the scripts to reproduce the experiments of \cref{sec:num} are available at \url{http://cdlab.uniud.it/software}.

\section{Renewal equations on \texorpdfstring{$L^2$}{L2}}
\label{sec:re-l2}

For $d \in \mathbb{N}$ and $\tau \in \mathbb{R}$ both positive, consider the function space $X \coloneqq L^{2}([-\tau, 0], \mathbb{R}^{d})$ equipped with the usual $L^{2}$ norm, denoted by $\norm{\cdot}_{X}$.
For $s \in \mathbb{R}$ and a function $x$ defined on $[s - \tau, + \infty)$ let
\begin{equation*}
x_{t}(\theta) \coloneqq x(t + \theta), \quad t \geq s, \; \theta \in [-\tau, 0].
\end{equation*}
Given a measurable function $C \colon [s , +\infty) \times [-\tau, 0] \rightarrow \mathbb{R}^{d \times d}$, we consider the linear nonautonomous RE%
\footnote{Thanks to the Riesz representation theorem for~$L^{2}$ (see, e.g., \cite[page 400]{RoydenFitzpatrick2010}), every linear nonautonomous RE of type $x(t)=L(t)x_t$ with $L(t)\colon X\to \mathbb{R}^{d}$ linear an bounded can be written in the form~\cref{RE}.}
\begin{equation}\label{RE}
x(t) = \int_{- \tau}^{0} C(t, \theta) x_{t}(\theta) \D \theta, \quad t > s.
\end{equation}
In the following it is convenient to shift the time variable by $s$, obtaining
\begin{equation}\label{RE-s}
x(t) = \int_{- \tau}^{0} C(s+t, \theta) x_{t}(\theta) \D \theta, \quad t > 0.
\end{equation}
There is obviously a one-to-one correspondence between the solutions $x$ of \cref{RE} and $\hat{x}$ of \cref{RE-s}, given by $\hat{x}(t)=x(s+t)$.
Given $\phi \in X$, we define the initial value problems (IVP) for \cref{RE} and \cref{RE-s} by imposing, respectively $x_{s} = \phi$ and $x_{0} = \phi$.

We interpret REs in light of the theory of Volterra equations presented in \cite[Chapter~9]{GripenbergLondenStaffans1990}.
Let $J\coloneqq [0,\tau]$.
The function $K\colon J^2\to\mathbb{R}^{d\times d}$ defined as%
\footnote{\label{fn:drop-s}%
Although $K$ and $g$ in \cref{g-from-phi} depend on $s$, we choose not to reflect this in the notation to keep it lighter. This should not cause any confusion, since in this \lcnamecref{sec:re-l2} $s$ is fixed. We treat the operator $\mathcal{F}$ in \cref{sec:discr,sec:conv-op} in the same way.}
\begin{equation}\label{K}
K(t, \sigma) \coloneqq
\begin{cases}
C(s + t, \sigma - t), & \text{if $\sigma\leq t$}, \\
0, & \text{if $\sigma>t$},
\end{cases}
\end{equation}
is a Volterra kernel on $J$.
If $t\in J$, the IVP defined by \cref{RE-s} and $x_0=\phi$ is equivalent to the Volterra integral equation (VIE) of the second kind
\begin{equation}\label{VIE}
x(t) = \int_{0}^{t} K(t, \sigma) x(\sigma) \D \sigma + g(t)
\end{equation}
with%
\footnote{We have defined $K$ on $J^2$ in order to adhere to the notation of \cite{GripenbergLondenStaffans1990}.
Moreover, in \cref{compactness} we will need to extend the domain of $K$ to $\mathbb{R}^2$ defining $K$ as $0$ outside $J^2$.
As such, we do not use $K$ in the definition of $g$, keeping $C$ instead.}
\begin{equation}\label{g-from-phi}
g(t) \coloneqq \int_{t-\tau}^{0} C(s+t,\sigma-t) \phi(\sigma) \D \sigma.
\end{equation}

Recall that a Volterra kernel $K$ is of type $L^p$ on $J$, with $p\geq1$ or $p=+\infty$, if $\kernorm{K}_{L^{p}(J)}<+\infty$, where%
\footnote{For ease of notation, we omit the codomain of function spaces in the subscript of norms.}
\begin{equation*}
\kernorm{K}_{L^{p}(J)} \coloneqq \sup_{\substack{\norm{f}_{L^q(J)} \leq 1 \\ \norm{g}_{L^p(J)} \leq 1 }} \int_{J} \int_{J} \abs{f(t)K(t, \sigma)g(\sigma)} \D s \D t,
\end{equation*}
$\abs{\cdot}$ is a norm on $\mathbb{R}^{d\times d}$, $q$ satisfies $1/p+1/q=1$ and the supremum is taken over scalar functions $f\in L^q(J,\mathbb{R})$ and $g\in L^p(J,\mathbb{R})$.
A Volterra kernel $R$ is a Volterra resolvent of $K$ on $J$ if
\begin{equation*}
K(t,\sigma) = R(t, \sigma) + \int_J K(t,\rho)R(\rho,\sigma)\D\rho = R(t, \sigma) + \int_J R(t,\rho)K(\rho,\sigma)\D\rho.
\end{equation*}

\begin{theorem}\label{existence-uniqueness-VIE}
If $K\in L^{\infty}(J^2,\mathbb{R}^{d\times d})$ and $g\in L^p(J,\mathbb{R}^{d})$ then \cref{VIE} has a unique solution $x\in L^p(J,\mathbb{R}^{d})$ on $J$, given by the variation of constant formula
\begin{equation}\label{VIE-sol}
x(t) = \int_{0}^{t} R(t, \sigma) g(\sigma) \D \sigma + g(t),
\end{equation}
where $R$ is the resolvent of $K$ on $J$.
\end{theorem}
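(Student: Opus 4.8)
The statement is a concrete instance of the resolvent theory for Volterra kernels developed in \cite[Chapter~9]{GripenbergLondenStaffans1990}, so the plan is essentially to verify that its hypotheses hold in the present setting and then to read off the conclusion. The first step is to observe that, since $J$ is bounded and $K\in L^{\infty}(J^{2},\mathbb{R}^{d\times d})$, the kernel $K$ is of type $L^{p}$ on $J$ for \emph{every} $p\in[1,+\infty]$. Indeed, for scalar $f,g$ with $\norm{f}_{L^{q}(J)}\leq1$ and $\norm{g}_{L^{p}(J)}\leq1$, two applications of H\"older's inequality give
\begin{equation*}
\int_{J}\int_{J}\abs{f(t)K(t,\sigma)g(\sigma)}\D\sigma\D t \leq \norm{K}_{L^{\infty}}\norm{f}_{L^{1}(J)}\norm{g}_{L^{1}(J)} \leq \tau\norm{K}_{L^{\infty}},
\end{equation*}
where the last inequality uses $\norm{f}_{L^{1}(J)}\norm{g}_{L^{1}(J)}\leq\abs{J}^{1/p+1/q}=\abs{J}=\tau$ together with $1/p+1/q=1$; hence $\kernorm{K}_{L^{p}(J)}\leq\tau\norm{K}_{L^{\infty}}<+\infty$, and in particular $K$ is of type $L^{\infty}$.

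The heart of the argument is the existence and uniqueness of the resolvent $R$, and this is where the Volterra (causal) structure is indispensable. Because $J$ is bounded, the Volterra integral operator $\mathcal{K}$, $(\mathcal{K}x)(t)=\int_{0}^{t}K(t,\sigma)x(\sigma)\D\sigma$, is quasi-nilpotent: by induction the iterated kernels $K^{(1)}\coloneqq K$, $K^{(n+1)}(t,\sigma)\coloneqq\int_{\sigma}^{t}K(t,\rho)K^{(n)}(\rho,\sigma)\D\rho$ obey the factorial bound $\abs{K^{(n)}(t,\sigma)}\leq\norm{K}_{L^{\infty}}^{n}(t-\sigma)^{n-1}/(n-1)!$, whence the associated Neumann series converges in $L^{\infty}(J^{2})$ and defines a Volterra kernel of type $L^{\infty}$ satisfying the resolvent equations. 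Rather than redo this estimate, I would invoke the corresponding theorem of \cite[Chapter~9]{GripenbergLondenStaffans1990}, which guarantees that a Volterra kernel of type $L^{\infty}$ on a bounded interval admits a unique resolvent of type $L^{\infty}$. I expect this quasi-nilpotency --- equivalently, the fact that $I-\mathcal{K}$ is boundedly invertible even though $\mathcal{K}$ is merely an integral operator --- to be the only genuinely delicate point, since for a non-causal kernel $(I-\mathcal{K})^{-1}$ need not exist.

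With $R$ in hand, the variation of constants formula \cref{VIE-sol} is obtained by a direct substitution that uses the resolvent equations to collapse the double integral: this shows that $x=g+\mathcal{R}g$, with $\mathcal{R}$ the integral operator of kernel $R$, solves \cref{VIE}, and applying the same identity to an arbitrary solution of \cref{VIE} yields uniqueness. Finally, since $R$ is of type $L^{\infty}$, hence of type $L^{p}$ by the first step, the map $g\mapsto\mathcal{R}g$ sends $L^{p}(J,\mathbb{R}^{d})$ into itself, so that $x\in L^{p}(J,\mathbb{R}^{d})$ as claimed. The remaining work is routine bookkeeping within the $L^{p}$ scale on the bounded interval $J$.
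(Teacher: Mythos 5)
Your proposal is correct and ultimately rests on the same resolvent machinery of \cite[Chapter~9]{GripenbergLondenStaffans1990} as the paper, but it takes a different route through the one step that genuinely requires care, namely the existence of the resolvent $R$. The paper gets $R$ by checking the local-smallness hypothesis of \cite[Lemma 9.3.3 and Theorem 9.3.14]{GripenbergLondenStaffans1990}: on a subinterval $J_i$ both $\kernorm{K}_{L^1(J_i)}$ and $\kernorm{K}_{L^{\infty}(J_i)}$ are bounded by $\abs{J_i}\,\norm{K}_{L^{\infty}(J^2)}$, hence are $<1$ for $\abs{J_i}$ small, and the Volterra structure patches the local resolvents together; the type-$L^p$ statements for $K$ and $R$ then follow by interpolation between $L^1$ and $L^{\infty}$ via \cite[Theorems 9.2.6 and 9.2.7]{GripenbergLondenStaffans1990}. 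You instead argue directly: a H\"older computation for the type-$L^p$ bound and a Neumann series with the factorial bound $\abs{K^{(n)}(t,\sigma)}\leq\norm{K}_{L^{\infty}}^{n}(t-\sigma)^{n-1}/(n-1)!$ for the resolvent. Both work, but your fallback citation --- ``a Volterra kernel of type $L^{\infty}$ on a bounded interval admits a resolvent of type $L^{\infty}$'' --- is not quite what is proved there: the theorem needs the kernel norm to be small on small subintervals, which is automatic for a kernel that is an $L^{\infty}(J^2)$ \emph{function} (precisely the paper's subinterval step) but not for a kernel merely \emph{of type} $L^{\infty}$. Your factorial bound uses the $L^{\infty}(J^2)$ hypothesis in exactly the right way, so the direct argument is the one to keep; it has the added benefit of giving $R\in L^{\infty}(J^2)$, so your ``type $L^p$ by the first step'' for $R$ goes through without any appeal to interpolation.
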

\begin{proof}
By \cite[Theorem 9.2.7]{GripenbergLondenStaffans1990},
\begin{equation}\label{kernorm}
\kernorm{K}_{L^1(J)} = \esssup_{\sigma \in J} \int_{J} \abs{K(t, \sigma)} \D t,
\quad
\kernorm{K}_{L^{\infty}(J)} = \esssup_{t \in J} \int_{J} \abs{K(t, \sigma)} \D \sigma.
\end{equation}
Both are bounded by $\tau\norm{K}_{L^{\infty}(J^2)}$, which implies that $K$ is a kernel both of type $L^1$ and of type $L^{\infty}$ on $J$.
By \cite[Theorem 9.2.6]{GripenbergLondenStaffans1990} it is also a kernel of type $L^p$ for any $p$.
Moreover, for a subinterval $J_{i} \subset J$ also $\kernorm{K}_{L^1(J_{i})}$ and $\kernorm{K}_{L^{\infty}(J_{i})}$ are both bounded by $\abs{J_{i}}\norm{K}_{L^{\infty}(J^2)}$, where $\abs{J_{i}}$ is the length of the interval $J_{i}$.
By choosing $J_{i}$ small enough, they are thus smaller than $1$.
By \cite[Lemma 9.3.3 and Theorem 9.3.14]{GripenbergLondenStaffans1990}, $K$ has a resolvent $R$ both of type $L^{1}$ and of type $L^{\infty}$ on $J$, and thus, again by \cite[Theorem 9.2.6]{GripenbergLondenStaffans1990}, also of type $L^p$ for any $p$.
We conclude by applying \cite[Theorem 9.3.6]{GripenbergLondenStaffans1990}.
\end{proof}

\begin{corollary}\label{existence-uniqueness-RE}
If $C\in L^{\infty}([s,s+\tau]\times[-\tau,0],\mathbb{R}^{d\times d})$ and $\phi\in X$, then the IVP defined by \cref{RE} and $x_s=\phi$ has a unique solution $x\in L^2([s,s+\tau],\mathbb{R}^{d})$ on $[s,s+\tau]$.
\end{corollary}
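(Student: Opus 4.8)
The plan is to transport the problem to the interval $J=[0,\tau]$ and then apply \cref{existence-uniqueness-VIE}. First I would invoke the one-to-one correspondence $\hat{x}(t)=x(s+t)$ between solutions of \cref{RE} and \cref{RE-s}: a function $x\in L^2([s,s+\tau],\mathbb{R}^d)$ solves the IVP with $x_s=\phi$ if and only if its shift solves the IVP defined by \cref{RE-s} and $x_0=\phi$ on $J$. By the equivalence already recorded in this \lcnamecref{sec:re-l2} — obtained by substituting $\sigma=t+\theta$ in \cref{RE-s}, splitting the resulting integral over $[t-\tau,0]$ and $[0,t]$, and using $x(\sigma)=\phi(\sigma)$ for $\sigma\le 0$ — this shifted IVP is in turn equivalent to the VIE \cref{VIE} with $K$ and $g$ as in \cref{K,g-from-phi}. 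It therefore suffices to verify the two hypotheses of \cref{existence-uniqueness-VIE} with $p=2$.

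For the kernel, I would observe that the affine, volume-preserving map $(t,\sigma)\mapsto(s+t,\sigma-t)$ carries the triangle $\{(t,\sigma)\in J^2:\sigma\le t\}$ into $[s,s+\tau]\times[-\tau,0]$, the domain of $C$; hence $K\in L^\infty(J^2,\mathbb{R}^{d\times d})$, with $\norm{K}_{L^\infty(J^2)}\le\norm{C}_{L^\infty}$. For the forcing term, I would show $g\in L^2(J,\mathbb{R}^d)$: for $t\in J$ the domain of integration $[t-\tau,0]$ is contained in $[-\tau,0]$ and has length at most $\tau$, and $\sigma-t\in[-\tau,0]$ there, so the Cauchy--Schwarz inequality together with the essential boundedness of $C$ gives the uniform bound $\abs{g(t)}\le\sqrt{\tau}\,\norm{C}_{L^\infty}\norm{\phi}_X$. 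Thus $g$ is in fact bounded, in particular of class $L^2$ on the bounded interval $J$.

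With both hypotheses in hand, \cref{existence-uniqueness-VIE} with $p=2$ yields a unique $x\in L^2(J,\mathbb{R}^d)$ solving \cref{VIE}, that is, a unique solution of the shifted IVP; transporting back via the correspondence produces the asserted unique $x\in L^2([s,s+\tau],\mathbb{R}^d)$. I expect no genuine obstacle here, since the statement is essentially a translation of \cref{existence-uniqueness-VIE}: the only step calling for a (routine) estimate is the membership $g\in L^2(J)$, which the Cauchy--Schwarz bound above settles directly, with no fixed-point or compactness argument beyond the resolvent already supplied by \cref{existence-uniqueness-VIE}.
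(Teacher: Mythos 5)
Your proposal is correct and follows essentially the same route as the paper: reduce to the shifted IVP, identify it with the VIE \cref{VIE} via \cref{K,g-from-phi}, check $K\in L^{\infty}(J^2)$ and $g\in L^2(J)$, and invoke \cref{existence-uniqueness-VIE} with $p=2$. The only cosmetic difference is that you bound $g$ pointwise via Cauchy--Schwarz, whereas the paper integrates the $L^1$-based bound $\abs{g(t)}\leq\norm{C}_{L^{\infty}}\norm{\phi}_{L^1([-\tau,0])}$ over $J$; the two estimates are equivalent.
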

\begin{proof}
As already observed above, the solutions of the considered RE and of \cref{RE-s} are in a one-to-one correspondence.
In turn, the IVP defined by \cref{RE-s} and $x_0=\phi$ is equivalent to \cref{VIE} with $K$ and $g$ given, respectively, by \cref{K} and \cref{g-from-phi}.
Since $C$ is in $L^{\infty}$, so is $K$.
Moreover, $\phi$, being in $L^2$, is also in $L^1$, and hence $g$ is in $L^2$:
\begin{equation*}
\begin{aligned}
\norm{g}^2_{L^2(J)} &= \int_{0}^{\tau}\abs[\Big]{\int_{t-\tau}^{0} C(s+t,\sigma-t) \phi(\sigma) \D \sigma}^2 \D t \\
&\leq \int_{0}^{\tau}\Bigl(\int_{t-\tau}^{0} \abs{C(s+t,\sigma-t)} \abs{\phi(\sigma)} \D \sigma \Bigr)^2\D t \\
&\leq \norm{C}^2_{L^{\infty}([s,s+\tau]\times[-\tau,0])} \int_{0}^{\tau}\Bigl(\int_{t-\tau}^{0} \abs{\phi(\sigma)} \D \sigma \Bigr)^2\D t \\
&\leq \norm{C}^2_{L^{\infty}([s,s+\tau]\times[-\tau,0])} \norm{\phi}^2_{L^1([-\tau, 0])} \tau < +\infty.
\end{aligned}
\end{equation*}
By applying \cref{existence-uniqueness-VIE} we obtain the unique solution $\hat{x}\in L^2([0,\tau],\mathbb{R}^{d})$ of \cref{VIE}, and thus of the IVP for \cref{RE-s}, on $[0,\tau]$, which yields the unique solution $x$ of the IVP for \cref{RE} on $[s,s+\tau]$ via $x(t) = \hat{x}(t-s)$.
\end{proof}

\begin{remark}\label{steps}
Assuming that $C\in L^{\infty}_{\text{loc}}([s,+\infty)\times[-\tau,0],\mathbb{R}^{d\times d})$, a reasoning on the lines of Bellman's method of steps~\cite{Bellman1961,BellmanCooke1965} allows to extend the solution of \cref{RE} to any $t > s$, by working successively on $[s+\tau,s+2\tau]$, $[s+2\tau,s+3\tau]$ and so on (see also \cite{BellenZennaro2003,BellmanCooke1963} for similar arguments, and \cite[Section 4.1.2]{Brunner2004} for VIEs).
The solution will be denoted by $x(t)$, or $x(t; s, \phi)$ when emphasis on $s$ and $\phi$ is required.
\end{remark}

Observe that in order to obtain the existence and uniqueness of solutions in $L^2$ (our case of interest), it is enough to require that $C$, and thus $K$, is an $L^2$ function, thanks to \cite[Corollary 9.3.16]{GripenbergLondenStaffans1990}.
We decided to state the previous results requiring instead an $L^{\infty}$ kernel (which allows to draw a stronger conclusion), because it is already needed for the compactness proof that will follow.
Moreover, we recall from%
\footnote{Although the focus of \cite{BredaLiessi2021} is on linear(ized) periodic REs, the result we mention here holds for the general nonautonomous case.}
\cite[Proposition 32]{BredaLiessi2021} that if a nonlinear RE has a $C^1$ and globally Lipschitz continuous right-hand side, the integration kernels of the corresponding linearized REs are $L^{\infty}$ on any compact time interval, so for our purposes the request is not too restrictive.

\section{Evolution operators}
\label{sec:evop}

As illustrated in \cref{sec:defLE}, in order to define the LEs of \cref{RE}, we need to define its family of evolution operators and prove that these are (eventually) compact (\cref{sec:compactness}).
Then, in \cref{sec:discr}, we present the discretization approach that will be used for the evolution operators in the approximation of the LEs in \cref{sec:apprLE}, showing that such discretization is well defined.
Finally, in \cref{sec:conv-op} we prove sufficient conditions for the convergence of the discretized operators (in a sense specified therein), which, again, will be needed for the convergence of the approximated LEs in \cref{sec:conv-exp}.

\subsection{Definition and compactness}
\label{sec:compactness}

Let $\{T(t,s)\}_{t\geq s}$ be the family of linear and bounded evolution operators (see \cite{BredaLiessi2021} for REs and \cite{ChiconeLatushkin1999,DiekmannVanGilsVerduynLunelWalther1995} in general) associated to~\cref{RE}, i.e.,
\begin{equation}\label{T}
T(t, s) \colon X \to X,\quad\quad T(t, s) \phi = x_{t}(\cdot; s, \phi).
\end{equation}
For the proof of the boundedness of $T(t,s)$, see \cref{T-bounded} below.

\begin{theorem}\label{compactness}
Let $s \in \mathbb{R}$, $h\geq\tau$, $\phi \in X$ and $C\in L^{\infty}([s,s+h]\times[-\tau,0],\mathbb{R}^{d\times d})$.
Consider the IVP defined by \cref{RE} and $x_s=\phi$.
The associated evolution operator $T(s+h, s)$ is compact.
\end{theorem}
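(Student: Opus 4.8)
The plan is to exploit the smoothing inherent in the integral renewal equation: after a full delay the state $x_{s+h}$ depends on the initial datum $\phi$ only through an $L^2$-averaging of $\phi$, and such averaging operators are Hilbert–Schmidt, hence compact. Concretely, I would first pass to the shifted solution $\hat x(t)=x(s+t)$ on $[0,h]$ and, following the footnote, extend the Volterra kernel $K$ of \cref{K} to $[0,h]^2$ (setting it to $0$ outside $J^2$). Since $C\in L^\infty$, the extended $K$ is in $L^\infty([0,h]^2)$, so \cref{existence-uniqueness-VIE} applied on $[0,h]$ in place of $J$ provides a resolvent $R$ of $K$ that is of type $L^\infty$ -- and therefore, by the interpolation argument in that proof, of type $L^2$ -- together with the variation of constants formula \cref{VIE-sol} valid for all $t\in[0,h]$. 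By uniqueness this $\hat x$ coincides with the solution obtained by the method of steps in \cref{steps}.

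Next I would factor the evolution operator into a compact part and a bounded part. Writing $\mathcal R$ for the integral operator with kernel $R$, the formula \cref{VIE-sol} reads $\hat x=(I+\mathcal R)g$, and since $R$ is of type $L^2$ the operator $\mathcal R$ is bounded on $L^2([0,h],\mathbb{R}^d)$. Because $h\geq\tau$ we have $[h-\tau,h]\subseteq[0,h]$, so the map $E\colon\hat x\mapsto\bigl(\theta\mapsto\hat x(h+\theta)\bigr)$ is a bounded operator from $L^2([0,h],\mathbb{R}^d)$ into $X$ (a shifted restriction). Recalling \cref{T}, this yields the factorization
\begin{equation*}
T(s+h,s)=E\circ(I+\mathcal R)\circ G,
\end{equation*}
where $G\colon X\to L^2([0,h],\mathbb{R}^d)$ is the map $\phi\mapsto g$ defined through \cref{g-from-phi} (with $g$ extended by $0$ on $[\tau,h]$, as it vanishes there).

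The crux is then to show that $G$ is compact, and this is the step carrying all the weight. From \cref{g-from-phi} one has $g(\sigma)=\int_{-\tau}^{0}B(\sigma,\rho)\phi(\rho)\D\rho$, where $B(\sigma,\rho)\coloneqq C(s+\sigma,\rho-\sigma)$ for $\sigma-\tau\leq\rho\leq 0$ and $B(\sigma,\rho)\coloneqq 0$ otherwise. Since $C\in L^\infty$ and the domain $[0,h]\times[-\tau,0]$ is bounded, $B$ is in $L^\infty$, hence in $L^2([0,h]\times[-\tau,0])$; thus $G$ is a Hilbert–Schmidt operator and in particular compact (the estimate in the proof of \cref{existence-uniqueness-RE} is exactly the verification that $G$ maps into $L^2$). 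A bounded operator composed with a compact one is compact, so $E\circ(I+\mathcal R)\circ G$ is compact, proving the claim.

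The main obstacle I anticipate is bookkeeping rather than conceptual: one must make sure the resolvent and the variation of constants formula are set up on the enlarged interval $[0,h]$ rather than on $J$, and must confirm that $R$ of type $L^2$ indeed makes $\mathcal R$ bounded on $L^2$ (as opposed to merely $L^1$ or $L^\infty$), which is why re-deriving type $L^2$ from the $L^1$--$L^\infty$ bounds -- as already done inside \cref{existence-uniqueness-VIE} via \cite[Theorem 9.2.6]{GripenbergLondenStaffans1990} -- is worth recording. A secondary delicate point is the regime $\tau\leq h<2\tau$, where the state $x_{s+h}$ still depends on $\phi$ directly and not only through the solution on $[0,h-\tau]$; the factorization through $G$ handles this uniformly, since the whole $\phi$-dependence is encapsulated in $g$ regardless of the size of $h$.
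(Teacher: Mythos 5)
Your proof is correct, but it takes a genuinely different route from the paper's. The paper proves relative compactness of the image of the unit ball directly via the Kolmogorov--M.~Riesz--Fr\'echet criterion: it extends everything by zero to $\mathbb{R}$, splits $\norm{\tau_{\eta}\psi-\psi}^2_{L^2(\mathbb{R})}$ into several pieces, and bounds each one using $\kernorm{R}_{L^{\infty}(J^2)}$, the $L^1$ integrability of $R$, and the continuity of translation in $L^1$, uniformly over the unit ball. You instead factor $T(s+h,s)=E\circ(I+\mathcal{R})\circ G$ and place all the compactness on $G$, which is Hilbert--Schmidt because its kernel $B(t,\rho)=C(s+t,\rho-t)\mathbf{1}_{\{t-\tau\leq\rho\leq 0\}}$ is essentially bounded with support of finite measure, hence square-integrable; the remaining factors are bounded (the resolvent operator because $R$ is of type $L^2$ by the interpolation argument already inside \cref{existence-uniqueness-VIE}, and $E$ because $h\geq\tau$ makes it a shift-restriction of norm one). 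Your argument is shorter and isolates exactly where compactness comes from, namely the averaging of $\phi$ in $g$; what the paper's translation-based argument buys is independence from the Hilbert-space structure, which is relevant because the authors are transplanting a proof designed for the classical $L^1$ state space of REs, where the Hilbert--Schmidt criterion is unavailable.

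One slip to correct: extending $K$ to $[0,h]^2$ by setting it to $0$ \emph{outside $J^2$} (with $J=[0,\tau]$) would annihilate the Volterra term for $t\in(\tau,h]$ and give the wrong solution there. The correct extension keeps $K(t,\sigma)=C(s+t,\sigma-t)$ on the whole strip $t-\tau\leq\sigma\leq t$ of $[0,h]^2$ and vanishes only for $\sigma>t$ or $\sigma<t-\tau$ (the paper's own footnote is loose on the same point). With that fixed, the resolvent and the variation-of-constants formula on $[0,h]$, and hence your factorization, go through.
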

\begin{proof}
We prove the thesis for \cref{VIE} with \cref{g-from-phi}, which implies the thesis for \cref{RE} with $x_s=\phi$ through \cref{K} and \cref{RE-s}.
Thanks to \cref{existence-uniqueness-VIE,existence-uniqueness-RE,steps}, given \cref{g-from-phi}, \cref{VIE} has a unique solution $x=x(\cdot; \phi)$ on $J\coloneqq[0,\tau]$ given by \cref{VIE-sol}; moreover, from the proof of \cref{existence-uniqueness-VIE}, $K$ is a Volterra kernel of type $L^{1}$ and of type $L^{\infty}$ on $J$ with a resolvent $R$ of type $L^{1}$ and of type $L^{\infty}$ on $J$, which in particular implies that $\kernorm{R}_{L^{\infty}(J^2)}<+\infty$.
Observe also that%
\footnote{The inequalities $\kernorm{R}_{L^{1}(J^2)}<+\infty$ and $\norm{R}_{L^{1}(J^2)} \leq \tau \kernorm{R}_{L^{1}(J^2)}$ hold as well.}
$\norm{R}_{L^{1}(J^2)} \leq \tau \kernorm{R}_{L^{\infty}(J^2)}<+\infty$, which implies that $R\in L^1(J^2, \mathbb{R}^{d\times d})$.
The evolution operator $T(s+h,s)$ of \cref{RE} coincides with the evolution operator $T(h,0)$ of \cref{RE-s} and of \cref{VIE}; let us denote it with $T$ for brevity.

It is convenient to extend the domains of $\phi$ and $g$ to $\mathbb{R}$ and those of $C$, $K$ and $R$ to $\mathbb{R}^2$ by defining them as $0$ outside their original domains; with a little abuse of notation, we use the same symbols to denote the extended functions.
For ease of notation, define also $J_C\coloneqq [s,s+h]\times[-\tau,0]$.

Our aim is to prove that the image of the unit ball in $X$ under $T$ is relatively compact in $X$.
Let $\Phi \coloneqq \{\phi \in X \mid \norm{\phi}_{X} \leq 1\}$ and let $\widehat{T \Phi}$ be the set of prolongations of functions in $T \Phi$ to $\mathbb{R}$ by $0$.
Observe that $\widehat{T \Phi}$ is bounded, since $T$ and $\Phi$ are bounded.
By the Kolmogorov--M.~Riesz--Fréchet theorem (see, e.g., \cite[Theorem 4.26]{Brezis2011}), defining $\tau_{\eta}\psi(\cdot) \coloneqq \psi(\eta + \cdot)$, it is sufficient to prove that
\begin{equation*}
\lim_{\eta \to 0} \norm{\tau_{\eta} \psi - \psi}_{L^{2}(\mathbb{R})} = 0
\end{equation*}
uniformly in $\psi \in \widehat{T \Phi}$, i.e., for each $\epsilon > 0$ there exists $\delta > 0$ such that $\norm{\tau_{\eta} \psi - \psi}_{L^{2}(\mathbb{R})} < \epsilon$ for all $\psi \in \widehat{T \Phi}$ and all $\eta \in \mathbb{R}$ such that $0 < \abs{\eta} < \delta$.
Let $\psi \in \widehat{T \Phi}$.
Then there exists $\phi \in \Phi$ such that for $t \in \mathbb{R}$
\begin{equation*}
\psi(t) =
\begin{cases}
(T \phi)(t), & \text{if } t \in [-\tau, 0], \\
0, & \text{otherwise.}
\end{cases}
\end{equation*}
Observe that $\norm{\psi}_{L^{2}(\mathbb{R})} = \norm{T \phi}_{X}$.
Assume that $\eta > 0$; the case $\eta < 0$ is analogous.
Without loss of generality, suppose $0 < \eta < \tau$.
Then
\begin{equation}\label{split-integral}
\begin{split}
\norm{\tau_{\eta} \psi - \psi}^2_{L^{2}(\mathbb{R})}
&= \int_{-\infty}^{+\infty} \abs{\psi(\theta+\eta) - \psi(\theta)}^2 \D \theta \\
&= \int_{-\tau}^{-\eta} \abs{(T \phi)(\theta+\eta) - (T \phi)(\theta)}^2 \D \theta \\
&\qquad + \int_{-\tau-\eta}^{-\tau} \abs{(T \phi)(\theta+\eta)}^2 \D \theta + \int_{-\eta}^{0} \abs{(T \phi)(\theta)}^2 \D \theta.
\end{split}
\end{equation}

Recalling \cref{VIE-sol,T}, for $\theta \in [-\tau, 0]$
\begin{equation*}
(T \phi)(\theta) = \int_{0}^{h+\theta} R(t+\theta, \sigma) g(\sigma) \D \sigma + g(t+\theta),
\end{equation*}
hence the three terms in the right-hand side of \cref{split-integral} are equal, respectively, to
\begin{align}
\begin{split}
&\int_{-\tau}^{-\eta} \abs[\Big]{\int_{0}^{h+\theta+\eta} R(h+\theta+\eta, \sigma) g(\sigma) \D \sigma + g(h+\theta+\eta) \\
&\qquad\qquad\qquad - \int_{0}^{h+\theta} R(h+\theta, \sigma) g(\sigma) \D \sigma - g(h+\theta)}^2 \D \theta,
\end{split} \label{A} \\[2ex]
&\int_{-\tau}^{-\tau+\eta} \abs[\Big]{\int_{0}^{h+\theta} R(h+\theta, \sigma) g(\sigma) \D \sigma + g(h+\theta)}^2 \D \theta \label{B} \\
\intertext{and}
&\int_{-\eta}^{0} \abs[\Big]{\int_{0}^{h+\theta} R(h+\theta, \sigma) g(\sigma) \D \sigma + g(h+\theta)}^2 \D \theta. \label{C}
\end{align}

By using the triangle inequality and expanding the square, \cref{C} is less than or equal to
\begin{equation*}
\begin{split}
&\int_{-\eta}^{0} \Bigl(\abs[\Big]{\int_{0}^{h+\theta} R(h+\theta, \sigma) g(\sigma) \D \sigma}^2
+\abs{g(h+\theta)}^2 \\
&\qquad\qquad\qquad+2\abs[\Big]{\int_{0}^{h+\theta} R(h+\theta, \sigma) g(\sigma) \D \sigma}\abs{g(h+\theta)}\Bigr) \D \theta.
\end{split}
\end{equation*}
Using the definition \cref{g-from-phi} of $g$ and recalling \cite[Theorem 9.2.7]{GripenbergLondenStaffans1990} (see \cref{kernorm}), we obtain
\begin{align*}
&\int_{-\eta}^{0} \abs[\Big]{\int_{0}^{h+\theta} R(h+\theta, \sigma) g(\sigma) \D \sigma}^2 \D \theta \\
&\leq \int_{-\eta}^{0} \abs[\Big]{\int_{0}^{h+\theta} R(h+\theta, \sigma) \int_{\sigma-\tau}^{0} C(s+\sigma,\rho-\sigma) \phi(\rho) \D \rho \D \sigma}^2 \D \theta \\
&\leq \int_{-\eta}^{0} \Bigl(\int_{0}^{h+\theta} \abs{R(h+\theta, \sigma)} \int_{\sigma-\tau}^{0} \abs{C(s+\sigma,\rho-\sigma)} \abs{\phi(\rho)} \D \rho \D \sigma\Bigr)^2 \D \theta \\
&\leq \norm{C}^2_{L^{\infty}(J_C)}\int_{-\eta}^{0} \Bigl(\int_{0}^{h+\theta} \abs{R(h+\theta, \sigma)} \int_{\sigma-\tau}^{0} \abs{\phi(\rho)} \D \rho \D \sigma\Bigr)^2 \D \theta \\
&\leq \norm{C}^2_{L^{\infty}(J_C)}\norm{\phi}^2_{X} \int_{-\eta}^{0} \Bigl(\int_{0}^{h+\theta} \abs{R(h+\theta, \sigma)} \D \sigma\Bigr)^2 \D \theta \\
&\leq \norm{C}^2_{L^{\infty}(J_C)}\norm{\phi}^2_{X} \kernorm{R}^2_{L^{\infty}(J^2)} \eta.
\end{align*}
As for the second term, we obtain
\begin{align*}
\int_{-\eta}^{0} \abs{g(h+\theta)}^2 \D \theta
&\leq\int_{-\eta}^{0} \abs[\Big]{\int_{h+\theta-\tau}^{0} C(s+h+\theta,\sigma-h-\theta) \phi(\sigma) \D \sigma}^2 \D \theta \\
&\leq\int_{-\eta}^{0} \Bigl(\int_{h+\theta-\tau}^{0} \abs{C(s+h+\theta,\sigma-h-\theta)} \abs{\phi(\sigma)} \D \sigma\Bigr)^2 \D \theta \\
&\leq\norm{C}^2_{L^{\infty}(J_C)}\norm{\phi}^2_{X} \eta.
\end{align*}
Finally, for the third term we obtain, with similar computations,
\begin{align*}
&\int_{-\eta}^{0} 2\abs[\Big]{\int_{0}^{h+\theta} R(h+\theta, \sigma) g(\sigma) \D \sigma}\abs{g(h+\theta)}\D \theta \\
&\leq2\int_{-\eta}^{0} \abs[\Big]{\int_{0}^{h+\theta} R(h+\theta, \sigma) \int_{\sigma-\tau}^{0} C(s+\sigma,\rho-\sigma) \phi(\rho) \D \rho \D \sigma} \\
&\qquad\qquad\qquad\cdot\abs[\Big]{\int_{h+\theta-\tau}^{0} C(s+h+\theta,\sigma-h-\theta) \phi(\sigma) \D \sigma}\D \theta \\
&\leq \norm{C}^2_{L^{\infty}(J_C)}\norm{\phi}^2_{X} \kernorm{R}_{L^{\infty}(J^2)} \eta.
\end{align*}
All these terms converge to $0$ uniformly in $\phi\in\Phi$ as $\eta\to0$.
The same computations prove the uniform convergence to $0$ of \cref{B}.

By splitting the first inner integral, using the triangle inequality and expanding the square, we obtain that \cref{A} is less than or equal to
\begin{equation}\label{A-bound}
\begin{aligned}
&\int_{-\tau}^{-\eta} \Bigl( \abs[\Big]{\int_{h+\theta}^{h+\theta+\eta} R(h+\theta+\eta, \sigma) g(\sigma) \D \sigma}^2 \\
&\qquad + \abs[\Big]{\int_{0}^{h+\theta} (R(h+\theta+\eta, \sigma) - R(h+\theta, \sigma)) g(\sigma) \D \sigma}^2 \\
&\qquad + \abs{g(h+\theta+\eta) - g(h+\theta)}^2 \\
&\qquad + 2\abs[\Big]{\int_{h+\theta}^{h+\theta+\eta} R(h+\theta+\eta, \sigma) g(\sigma) \D \sigma}\abs[\Big]{\int_{0}^{h+\theta} (R(h+\theta+\eta, \sigma) - R(h+\theta, \sigma)) g(\sigma) \D \sigma} \\
&\qquad + 2\abs[\Big]{\int_{h+\theta}^{h+\theta+\eta} R(h+\theta+\eta, \sigma) g(\sigma) \D \sigma}\abs{g(h+\theta+\eta) - g(h+\theta)}^2  \\
&\qquad + 2\abs[\Big]{\int_{0}^{h+\theta} (R(h+\theta+\eta, \sigma) - R(h+\theta, \sigma)) g(\sigma) \D \sigma}\abs{g(h+\theta+\eta) - g(h+\theta)}^2 \Bigr) \D \theta.
\end{aligned}
\end{equation}
For the first term, we obtain
\begin{equation}\label{A-bound-1}
\begin{aligned}
&\int_{-\tau}^{-\eta} \abs[\Big]{\int_{h+\theta}^{h+\theta+\eta} R(h+\theta+\eta, \sigma) g(\sigma) \D \sigma}^2 \D \theta \\
&\leq \int_{-\tau}^{-\eta} \Bigl( \int_{h+\theta}^{h+\theta+\eta} \abs{R(h+\theta+\eta, \sigma)} \int_{\sigma-\tau}^{0} \abs{C(s+\sigma,\rho-\sigma)} \abs{\phi(\rho)} \D \rho \D \sigma\Bigr)^2 \D \theta \\
&\leq \norm{C}^2_{L^{\infty}(J^2)}\norm{\phi}^2_{X} \int_{-\tau}^{-\eta} \Bigl( \int_{h+\theta}^{h+\theta+\eta} \abs{R(h+\theta+\eta, \sigma)} \D \sigma\Bigr)^2 \D \theta \\
&\leq \norm{C}^2_{L^{\infty}(J^2)}\norm{\phi}^2_{X} \int_{-\tau}^{-\eta} \Bigl( \int_{h+\theta}^{h+\theta+\eta} \abs{R(h+\theta, \sigma)} \D \sigma \\
&\quad+ \int_{h+\theta}^{h+\theta+\eta} \abs{R(h+\theta+\eta, \sigma)-R(h+\theta, \sigma)} \D \sigma\Bigr)^2 \D \theta \\
&\leq \norm{C}^2_{L^{\infty}(J^2)}\norm{\phi}^2_{X} \Bigl[ \int_{-\tau}^{-\eta} \Bigl( \int_{h+\theta}^{h+\theta+\eta} \abs{R(h+\theta, \sigma)} \D \sigma\Bigr)^2 \D \theta \\
&\quad+ \int_{-\tau}^{-\eta} \Bigl( \int_{h+\theta}^{h+\theta+\eta} \abs{R(h+\theta+\eta, \sigma)-R(h+\theta, \sigma)} \D \sigma\Bigr)^2 \D \theta \\
&\quad+2\int_{-\tau}^{-\eta} \Bigl( \int_{h+\theta}^{h+\theta+\eta} \abs{R(h+\theta, \sigma)} \D \sigma\Bigr)\Bigl( \int_{h+\theta}^{h+\theta+\eta} \abs{R(h+\theta+\eta, \sigma)-R(h+\theta, \sigma)} \D \sigma\Bigr) \D \theta\Bigr].
\end{aligned}
\end{equation}
Considering the first addend and using H\"older's inequality, we obtain
\begin{align*}
&\int_{-\tau}^{-\eta} \Bigl( \int_{h+\theta}^{h+\theta+\eta} \abs{R(h+\theta, \sigma)} \D \sigma\Bigr)^2 \D \theta \\
&\leq \int_{\mathbb{R}} \Bigl( \int_{\mathbb{R}} \mathbf{1}_{[h+\theta,h+\theta+\eta]}(\sigma)\abs{R(h+\theta, \sigma)} \D \sigma\Bigr)^2 \D \theta \\
&\leq \norm[\Big]{\int_{\mathbb{R}} \mathbf{1}_{[h+\theta,h+\theta+\eta]}(\sigma)\abs{R(h+\theta, \sigma)} \D \sigma}_{L^{\infty}(\mathbb{R})} \norm[\Big]{\int_{\mathbb{R}} \mathbf{1}_{[h+\theta,h+\theta+\eta]}(\sigma)\abs{R(h+\theta, \sigma)} \D \sigma}_{L^{1}(\mathbb{R})}\\
&\leq \kernorm{R}_{L^{\infty}(J^2)} \int_{\mathbb{R}} \int_{h+\theta}^{h+\theta+\eta}\abs{R(h+\theta, \sigma)} \D \sigma\D\theta,
\end{align*}
which converges to $0$ as $\eta\to0$ because $R$ is an $L^{1}$ function.
Similarly, for the second addend in \cref{A-bound-1} we obtain
\begin{align*}
&\int_{-\tau}^{-\eta} \Bigl( \int_{h+\theta}^{h+\theta+\eta} \abs{R(h+\theta+\eta, \sigma)-R(h+\theta, \sigma)} \D \sigma\Bigr)^2 \D \theta \\
&\leq \kernorm{R(\eta+\cdot,\cdot)-R}_{L^{\infty}(J^2)} \int_{\mathbb{R}} \int_{h+\theta}^{h+\theta+\eta} \abs{R(h+\theta+\eta, \sigma)-R(h+\theta, \sigma)} \D \sigma\D\theta \\
&\leq \kernorm{R(\eta+\cdot,\cdot)-R}_{L^{\infty}(J^2)} \int_{\mathbb{R}^2} \abs{R(t+\eta, \sigma)-R(t, \sigma)} \D \mu,
\end{align*}
where $\mu$ is the Lebesgue measure on $\mathbb{R}^{2}$ and we used Fubini's theorem.
This converges to $0$ thanks to the continuity of translation in $L^{1}(\mathbb{R}^2,\mathbb{R}^{d\times d})$ and because $R(\eta+\cdot,\cdot)$ is a Volterra kernel of type $L^{\infty}$ since $\eta$ is positive%
\footnote{If $\eta<0$ we can shift the integration variables to obtain $R-R(-\eta+\cdot,\cdot)$.}.
The third addend in \cref{A-bound-1} is treated in the same way, and it is indifferent which factor is chosen for the $L^1$ and $L^{\infty}$ norms.
All these limits hold independently of $\phi\in\Phi$.
The second term in \cref{A-bound} is treated similarly.
For the third term, let us first observe, similarly to the proof of \cref{existence-uniqueness-RE}, that $g$ is in $L^{\infty}$:
\begin{equation*}
\begin{aligned}
\norm{g}_{L^{\infty}(J)} &= \esssup_{t\in J}\abs[\Big]{\int_{t-\tau}^{0} C(s+t,\sigma-t) \phi(\sigma) \D \sigma} \\
&\leq \esssup_{t\in J}\int_{-\tau}^{0} \abs{C(s+t,\sigma-t)} \abs{\phi(\sigma)} \D \sigma \\
&\leq \norm{C}_{L^{\infty}(J_C)} \norm{\phi}_{L^1([-\tau, 0])} < +\infty,
\end{aligned}
\end{equation*}
where we used the fact that $\phi$ is in $L^1$, being in $L^2$.
We obtain
\begin{align*}
&\int_{-\tau}^{-\eta} \abs{g(h+\theta+\eta) - g(h+\theta)}^2 \D \theta \\
&\leq\int_{\mathbb{R}} \abs{g(h+\theta+\eta) - g(h+\theta)}^2 \D \theta \\
&\leq\norm{g(\eta+\cdot) - g}_{L^{\infty}(\mathbb{R})}\norm{g(\eta+\cdot) - g}_{L^{1}(\mathbb{R})},
\end{align*}
which converges to $0$ as $\eta\to0$ independently of $\phi\in\Phi$ thanks to the continuity of translation in $L^{1}(\mathbb{R},\mathbb{R})$.
The remaining terms in \cref{A-bound} are treated similarly.
\end{proof}

\subsection{Discretization}
\label{sec:discr}

Let $h \in \mathbb{R}$ be positive and define
\begin{equation}\label{Tsh}
T \coloneqq T(s + h, s).
\end{equation}
Inspired by the approach of \cite{BredaMasetVermiglio2012,BredaLiessi2018} (more recent and versatile than that of \cite{BredaVanVleck2014}), we reformulate $T$ as follows.

Define the function spaces $X^{+} \coloneqq L^{2}([0, h], \mathbb{R}^{d})$ and $X^{\pm} \coloneqq L^{2}([-\tau, h], \mathbb{R}^{d})$, equipped with the corresponding $L^{2}$ norms denoted, respectively, by $\norm{\cdot}_{X^{+}}$ and $\norm{\cdot}_{X^{\pm}}$.
Define the operator $V \colon X \times X^{+} \to X^{\pm}$ as
\begin{equation*}
V(\phi, w)(t) \coloneqq
\begin{cases}
w(t), & \quad t \in (0, h], \\
\phi(t), & \quad t \in [-\tau, 0].
\end{cases}
\end{equation*}
Let also $V^{-} \colon X \to X^{\pm}$ and $V^{+} \colon X^{+} \to X^{\pm}$ be given, respectively, by $V^{-} \phi \coloneqq V(\phi, 0_{X^{+}})$ and $V^{+} w \coloneqq V(0_{X}, w)$, where $0_{Y}$ denotes the null element of a linear space $Y$ (similarly, $I_{Y}$ in the sequel stands for the identity operator in $Y$).
Observe that
\begin{equation}\label{decomp_V}
V(\phi, w) = V^{-} \phi + V^{+} w.
\end{equation}
Define also the operator $\mathcal{F} \colon X^{\pm} \to X^{+}$ as%
\footnote{See \cref{fn:drop-s}.}
\begin{equation*}
\mathcal{F} v(t) \coloneqq \int_{- \tau}^{0} C(s+t, \theta) v(t+\theta) \D \theta, \quad\quad t \in [0, h].
\end{equation*}
Finally, the evolution operator $T$ is reformulated as
\begin{equation}\label{T-as-V}
T \phi = V(\phi, w^{\ast})_{h},
\end{equation}
where $w^{\ast} \in X^{+}$ is the solution of the fixed point equation 
\begin{equation}\label{fixed_point}
w = \mathcal{F} V(\phi, w),
\end{equation}
which exists uniquely thanks to~\cref{existence-uniqueness-RE}.

\bigskip

The following discretization of the infinite-dimensional operator $T$ is inspired by \cite{BredaVanVleck2014}; the change from the one in \cite{BredaLiessi2018} is motivated by the choice of a state space of $L^2$ functions.
Let $\{\psi_i\}_{i\in\mathbb{N}}$ and $\{\psi^+_i\}_{i\in\mathbb{N}}$ be, respectively, the families of Legendre polynomials on $[-\tau,0]$ and $[0,h]$.
Let $M$ and $N$ be positive integers and let $0\leq \theta^+_1<\dots\theta^+_N\leq h$ be the zeros of $\psi^+_N$.
The discrete counterparts of $X$ and $X^+$ are, respectively, $X_{M} \coloneqq (\mathbb{R}^{d})^{M+1}$ and $X_{N}^{+} \coloneqq (\mathbb{R}^{d})^{N}$.
The restriction operator $R_{M}\colon X \to X_{M}$ is defined as $R_{M}\phi\coloneqq (\phi_0,\dots,\phi_M)$, with $\phi_i$ being the coefficients of the Fourier projection of $\phi$ on the Legendre polynomials, i.e., $\phi = \sum_{i=0}^{+\infty}\phi_i\psi_i$.
The prolongation operator $P_{M}\colon X_{M} \to X$ is defined as $P_{M}\Phi\coloneqq \sum_{i=0}^{M}\Phi_i\psi_i$, with $\Phi=(\Phi_0,\dots,\Phi_M)\in X_M$.
Consider a subspace $\widetilde{X}^+$ of $X^+$ regular enough to make point-wise evaluation meaningful.
The restriction operator $R_{N}^{+} \colon \widetilde{X}^{+} \to X_{N}^{+}$ is given by $R_{N}^{+} w \coloneqq (w(\theta^+_{1}), \dots, w(\theta^+_{N}))$.
The prolongation operator $P_{N}^{+} \colon X_{N}^{+} \to X^{+}$ is the discrete Lagrange interpolation operator
$P_{N}^{+} W(t) \coloneqq \sum_{i = 1}^{N} \ell_{i}^{+}(t) W_{i}$,
where $t \in [0, h]$, $W=(W_1,\dots,W_N)$, and $\ell_{1}^{+}, \dots, \ell_{N}^{+}$ are the Lagrange polynomials on the nodes $\{\theta^+_i\}_{i\in\{1,\dots,N\}}$.
Observe that
\begin{equation*}
R_{M} P_{M} = I_{X_{M}}, \quad\quad
P_{M} R_{M} = F_{M},
\end{equation*}
where $F_{M} \colon X \to X$ is the Fourier projection operator on $\{\psi_i\}_{i\in\{0,\dots,M\}}$, while
\begin{equation}\label{RP-PR-p}
R_{N}^{+} P_{N}^{+} = I_{X_{N}^{+}}, \quad\quad
P_{N}^{+} R_{N}^{+} = \mathcal{L}_{N}^{+},
\end{equation}
where $\mathcal{L}_{N}^{+} \colon \widetilde{X}^{+} \to X^{+}$ is the Lagrange interpolation operator on $\{\theta^+_i\}_{i\in\{1,\dots,N\}}$.
Observe that the range of $P_{N}^{+}$ and $\mathcal{L}_{N}^{+}$ is the space of polynomials of degree at most $N-1$.

Following~\cref{T-as-V,fixed_point}, the discretization of indices $M$ and $N$ of the evolution operator $T$ in~\cref{T} is the finite-dimensional operator $\mathbf{T}_{M, N} \colon X_{M} \to X_{M}$ defined as
\begin{equation}\label{TMNbold}
\mathbf{T}_{M, N} \Phi \coloneqq R_{M} V(P_{M} \Phi, P_{N}^{+} W^{\ast})_{h},
\end{equation}
where $W^{\ast} \in X_{N}^{+}$ is a solution of the fixed point equation
\begin{equation}\label{discrete_FP}
W = R_{N}^{+} \mathcal{F} V(P_{M} \Phi, P_{N}^{+} W)
\end{equation}
for the given $\Phi \in X_{M}$.

\bigskip

We now establish that \cref{discrete_FP} is well-posed following \cite{BredaLiessi2018}.
More precisely, given $\phi \in X$, we consider the collocation equation
\begin{equation}\label{collocation}
W = R_{N}^{+} \mathcal{F} V(\phi, P_{N}^{+} W)
\end{equation}
in $W \in X_{N}^{+}$, we show that it has a unique solution and we study its relation to the unique solution $w^{\ast} \in X^{+}$ of~\cref{fixed_point}.

Using~\cref{decomp_V}, the equations~\cref{fixed_point,collocation} can be rewritten, respectively, as
\begin{equation}\label{fixed-point-2}
(I_{X^{+}} - \mathcal{F} V^{+}) w = \mathcal{F} V^{-} \phi
\end{equation}
and
\begin{equation}\label{collocation_2}
(I_{X_{N}^{+}} - R_{N}^{+} \mathcal{F} V^{+} P_{N}^{+}) W = R_{N}^{+} \mathcal{F} V^{-} \phi.
\end{equation}
The following preliminary result concerns the operators
\begin{equation}\label{cont_coll_op}
I_{X^{+}} - \mathcal{L}_{N}^{+} \mathcal{F} V^{+} \colon X^{+} \to X^{+},
\end{equation}
and
\begin{equation}\label{discr_coll_op}
I_{X_{N}^{+}} - R_{N}^{+} \mathcal{F} V^{+} P_{N}^{+} \colon X_{N}^{+} \to X_{N}^{+}.
\end{equation}

\begin{proposition}[\protect{\cite[Proposition 4.2]{BredaLiessi2018}}]\label{discr-cont-coll_eq}
If the operator~\cref{cont_coll_op} is invertible, then the operator~\cref{discr_coll_op} is invertible.
Moreover, given $\bar{W} \in X_{N}^{+}$, the unique solution $\hat{w} \in X^{+}$ of
\begin{equation}\label{cont_coll_eq}
(I_{X^{+}} - \mathcal{L}_{N}^{+} \mathcal{F} V^{+}) w  = P_{N}^{+} \bar{W}
\end{equation}
and the unique solution $\hat{W} \in X_{N}^{+}$ of
\begin{equation}\label{discr_coll_eq}
(I_{X_{N}^{+}} - R_{N}^{+} \mathcal{F} V^{+} P_{N}^{+}) W = \bar{W}
\end{equation}
are related by $\hat{W} = R_{N}^{+} \hat{w}$ and $\hat{w} = P_{N}^{+} \hat{W}$.
\end{proposition}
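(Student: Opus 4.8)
The plan is to exploit the algebraic relations between the restriction and prolongation operators recorded in \cref{RP-PR-p}, together with the crucial fact that the range of both $P_{N}^{+}$ and $\mathcal{L}_{N}^{+}$ is the space of polynomials of degree at most $N-1$. Writing $A \coloneqq \mathcal{F} V^{+}$ for brevity, the continuous operator \cref{cont_coll_op} reads $I_{X^{+}} - P_{N}^{+} R_{N}^{+} A$ and the discrete one \cref{discr_coll_op} reads $I_{X_{N}^{+}} - R_{N}^{+} A P_{N}^{+}$, so the two are built from $A$ sandwiched between $P_{N}^{+}$ and $R_{N}^{+}$ in the two possible orders, and the whole argument amounts to transferring solutions back and forth through these operators. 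First I would record the identities to be used repeatedly: $R_{N}^{+} P_{N}^{+} = I_{X_{N}^{+}}$, $P_{N}^{+} R_{N}^{+} = \mathcal{L}_{N}^{+}$, the idempotency $\mathcal{L}_{N}^{+}\mathcal{L}_{N}^{+} = \mathcal{L}_{N}^{+}$ (immediate from the first two), and the consequence $R_{N}^{+} \mathcal{L}_{N}^{+} = R_{N}^{+}$.

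Next I would construct, for an arbitrary $\bar{W} \in X_{N}^{+}$, a solution of the discrete equation \cref{discr_coll_eq} out of the continuous one. By the invertibility hypothesis there is a unique $\hat{w} \in X^{+}$ solving \cref{cont_coll_eq}, i.e.\ $\hat{w} = \mathcal{L}_{N}^{+} A \hat{w} + P_{N}^{+} \bar{W}$. The key observation---and the step I expect to matter most---is that the right-hand side of this identity lies in the range of $\mathcal{L}_{N}^{+}$ (equivalently of $P_{N}^{+}$), namely the polynomials of degree at most $N-1$; hence $\hat{w}$ itself is such a polynomial, so that $P_{N}^{+} R_{N}^{+} \hat{w} = \mathcal{L}_{N}^{+} \hat{w} = \hat{w}$. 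Setting $\hat{W} \coloneqq R_{N}^{+} \hat{w}$ then gives $P_{N}^{+} \hat{W} = \hat{w}$ immediately, which already establishes the two asserted relations $\hat{W} = R_{N}^{+} \hat{w}$ and $\hat{w} = P_{N}^{+} \hat{W}$.

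Finally I would verify that this $\hat{W}$ solves \cref{discr_coll_eq}: applying $R_{N}^{+}$ to \cref{cont_coll_eq} and using $R_{N}^{+} \mathcal{L}_{N}^{+} = R_{N}^{+}$ and $R_{N}^{+} P_{N}^{+} = I_{X_{N}^{+}}$ turns it into $\hat{W} - R_{N}^{+} A \hat{w} = \bar{W}$, and substituting $\hat{w} = P_{N}^{+} \hat{W}$ yields exactly $(I_{X_{N}^{+}} - R_{N}^{+} A P_{N}^{+})\hat{W} = \bar{W}$. Since $\bar{W}$ was arbitrary, the discrete operator \cref{discr_coll_op} is surjective; as $X_{N}^{+}$ is finite-dimensional, surjectivity forces invertibility, whence $\hat{W}$ is in fact the unique solution of \cref{discr_coll_eq}. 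Uniqueness of $\hat{w}$ comes from the hypothesis and that of $\hat{W}$ from the invertibility just established, so the correspondence $\hat{w} \leftrightarrow \hat{W}$ of the statement is well defined. The only genuine subtlety is the polynomial-range argument of the middle step; everything else is bookkeeping with the identities in \cref{RP-PR-p}.
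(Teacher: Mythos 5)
Your argument is correct and is essentially the same as the one the paper relies on (the paper does not reprove this statement but defers to \cite[Proposition 4.2]{BredaLiessi2018}, whose proof rests on exactly the observations you isolate: the fixed point $\hat{w}$ of \cref{cont_coll_eq} is forced to be a polynomial of degree at most $N-1$, so it can be transported back and forth via $R_{N}^{+}$ and $P_{N}^{+}$ using \cref{RP-PR-p}, and invertibility of \cref{discr_coll_op} follows from surjectivity in finite dimension). The only point worth making explicit is that applying $R_{N}^{+}$ to $\mathcal{F}V^{+}\hat{w}$ requires $\mathcal{F}V^{+}$ to have range in $\widetilde{X}^{+}$, which is already implicit in the statement since otherwise $\mathcal{L}_{N}^{+}\mathcal{F}V^{+}$ would not be defined.
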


As observed above, \cref{collocation} is equivalent to~\cref{collocation_2}, hence, by choosing
\begin{equation}\label{Wbar}
\bar{W} = R_{N}^{+} \mathcal{F} V^{-} \phi,
\end{equation}
it is equivalent to~\cref{discr_coll_eq}.
Observe also that thanks to~\cref{RP-PR-p} the equation
\begin{equation}\label{collocation_hat}
w = \mathcal{L}_{N}^{+} \mathcal{F} V(\phi, w)
\end{equation}
can be rewritten as $(I_{X^{+}} - \mathcal{L}_{N}^{+} \mathcal{F} V^{+}) w
= \mathcal{L}_{N}^{+} \mathcal{F} V^{-} \phi
= P_{N}^{+} R_{N}^{+} \mathcal{F} V^{-} \phi$,
which is equivalent to~\cref{cont_coll_eq} with the choice~\cref{Wbar}.
Thus, by \cref{discr-cont-coll_eq}, if the operator~\cref{cont_coll_op} is invertible, then the equation~\cref{collocation} has a unique solution $W^{\ast} \in X_{N}^{+}$ such that
\begin{equation}\label{stars-vs-hats}
W^{\ast} = R_{N}^{+} w^{\ast}_{N}, \qquad
w^{\ast}_{N} = P_{N}^{+} W^{\ast},
\end{equation}
where $w^{\ast}_{N} \in X^{+}$ is the unique solution of~\cref{collocation_hat}.
Note for clarity that~\cref{Wbar} implies $w^{\ast}_{N} = \hat{w}$ with~$\hat{w}$ as in \cref{discr-cont-coll_eq}.
So, now we show that~\cref{cont_coll_op} is invertible under due assumptions.

Consider the Sobolev space%
\footnote{We define the Sobolev space on the closed interval as the one on the open interval with its element prolonged to the interval endpoints by continuity, see \cite[Theorem 8.2]{Brezis2011}.}
$H^{1,+} \coloneqq H^1([0,h],\mathbb{R}^{d}) = W^{1,2}([0,h],\mathbb{R}^{d})$ equipped with the norm defined by $\norm{w}^2_{H^{1,+}} \coloneqq \norm{w}^2_{X^+}+\norm{w'}^2_{X^+}$, where $w'$ is the weak derivative of $w$.
Recall that $H^{1,+}\subset L^2([0,h],\mathbb{R}^{d}) = X^+$ and $H^{1,+}\subset C([0,h],\mathbb{R}^{d})$ (see, e.g., \cite[Theorem 8.8]{Brezis2011}).
In particular, $H^{1,+}$ is a suitable choice for $\widetilde{X}^+$.
In the following, some hypotheses on $\mathcal{F}$ and $V$ are needed beyond the one on $C$ of \cref{existence-uniqueness-RE}, in order to attain the regularity required to ensure the convergence of the method, namely
\begin{H-hyp}
\item\label{hyp-C} $C\in L^{\infty}([s,s+h]\times[-\tau,0],\mathbb{R}^{d\times d})$ (cf.\ \cref{existence-uniqueness-RE});
\item\label{FsVp-to-AC} $\mathcal{F} V^{+} \colon X^{+} \to X^{+}$ has range contained in $H^{1,+}$ and $\mathcal{F} V^{+} \colon X^{+} \to H^{1,+}$ is bounded;
\item\label{FXsVm-to-AC} $\mathcal{F} V^{-} \colon X \to X^{+}$ has range contained in $H^{1,+}$ and $\mathcal{F} V^{-} \colon X \to H^{1,+}$ is bounded.
\end{H-hyp}

Note that \cref{hyp-C,FsVp-to-AC} are used here to prove that the discretization $\mathbf{T}_{M,N}$ is well defined, while \cref{FXsVm-to-AC} will be used in \cref{sec:conv-op} where the convergence to $T$ is discussed.

\begin{lemma}\label{bounded-inverse-fp-eq}
Assuming \cref{hyp-C}, the operator $I_{X^{+}} - \mathcal{F} V^{+}$ is invertible with bounded inverse and~\cref{fixed_point} admits a unique solution in~$X^{+}$.
\end{lemma}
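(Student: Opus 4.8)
The plan is to identify the operator $\mathcal{F} V^{+}$ with the Volterra integral operator whose kernel is $K$ from \cref{K}, and then to read off the inverse of $I_{X^{+}} - \mathcal{F} V^{+}$ directly from the resolvent $R$ already constructed in the proof of \cref{existence-uniqueness-VIE}. First I would compute $\mathcal{F} V^{+}$ explicitly: for $w \in X^{+}$ and $t \in [0,h]$, the change of variables $\sigma = t + \theta$ in the definition of $\mathcal{F}$, together with the fact that $V^{+} w$ vanishes on $[-\tau,0]$, turns the upper limit of integration into $t$ and yields $(\mathcal{F} V^{+} w)(t) = \int_{0}^{t} K(t,\sigma) w(\sigma) \D \sigma$, with $K$ as in \cref{K} (extended by $0$ where the second argument of $C$ leaves $[-\tau,0]$). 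Hence $\mathcal{F} V^{+}$ is exactly the Volterra operator of kernel $K$ on the interval $[0,h]$; under \cref{hyp-C} this kernel belongs to $L^{\infty}$, so $\mathcal{F} V^{+}$ is bounded on $X^{+}$ and $I_{X^{+}} - \mathcal{F} V^{+}$ is a well-defined bounded operator.

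Next I would invoke the resolvent machinery from the proof of \cref{existence-uniqueness-VIE}, applied on $[0,h]$ in place of $J$ (the results of \cite{GripenbergLondenStaffans1990} hold verbatim on any finite interval). This gives that $K$ is a Volterra kernel of type $L^{\infty}$, hence of type $L^{2}$, admitting a resolvent $R$ of type $L^{\infty}$ and thus of type $L^{2}$. Since the kernel norm $\kernorm{R}_{L^{2}([0,h])}$ bounds the norm of the induced operator on $L^{2}([0,h]) = X^{+}$, the Volterra operator $\mathcal{R}$ of kernel $R$ is bounded on $X^{+}$. The variation-of-constants formula \cref{VIE-sol} then exhibits, for every $g \in X^{+}$, the unique $w \in X^{+}$ solving $(I_{X^{+}} - \mathcal{F} V^{+}) w = g$ as $w = g + \mathcal{R} g$; equivalently $(I_{X^{+}} - \mathcal{F} V^{+})^{-1} = I_{X^{+}} + \mathcal{R}$ is bounded, which is the first assertion.

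For the second assertion, using the decomposition \cref{decomp_V} the fixed point equation \cref{fixed_point} is equivalent to \cref{fixed-point-2}, namely $(I_{X^{+}} - \mathcal{F} V^{+}) w = \mathcal{F} V^{-} \phi$. The same change of variables shows that $\mathcal{F} V^{-} \phi$ coincides with the function $g$ of \cref{g-from-phi}, which lies in $X^{+}$ exactly as estimated in the proof of \cref{existence-uniqueness-RE}. The invertibility just established therefore yields the unique solution $w^{\ast} = (I_{X^{+}} - \mathcal{F} V^{+})^{-1} \mathcal{F} V^{-} \phi \in X^{+}$.

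I expect the main obstacle to be the bookkeeping in the identification of $\mathcal{F} V^{+}$ with the Volterra operator of kernel $K$: one has to track carefully how $V^{+}$ truncates the prolonged history to $0$ on $[-\tau,0]$ and how the substitution $\sigma = t + \theta$ carries the fixed window $[-\tau,0]$ in $\theta$ to the window $[\max(0,t-\tau),\,t]$ in $\sigma$, so that the Volterra (lower-triangular) structure and the precise kernel \cref{K} emerge. Once this is done, the remainder is a direct transcription of the resolvent theory already assembled for \cref{existence-uniqueness-VIE}, the only change being the cosmetic replacement of the interval $[0,\tau]$ by $[0,h]$.
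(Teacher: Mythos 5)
Your proof is correct, but it takes a more constructive route than the paper. The paper's own proof is three lines: bijectivity of $I_{X^{+}} - \mathcal{F} V^{+}$ is read off from the existence--uniqueness result (\cref{existence-uniqueness-RE}, extended to $[0,h]$ via \cref{steps}), boundedness of $\mathcal{F}$ and $V^{+}$ gives boundedness of the operator itself, and the bounded inverse theorem then yields boundedness of the inverse with no explicit formula. You instead unpack the underlying resolvent machinery: you identify $\mathcal{F} V^{+}$ with the Volterra operator of kernel $K$ from \cref{K} on $[0,h]$, and exhibit the inverse explicitly as $I_{X^{+}} + \mathcal{R}$ with $\mathcal{R}$ the integral operator of the resolvent kernel $R$, bounded on $X^{+}$ because $R$ is of type $L^{2}$. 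Both arguments ultimately rest on the same theorems of \cite{GripenbergLondenStaffans1990} (your identification of $\mathcal{F}V^{+}$ and of $\mathcal{F}V^{-}\phi$ with $g$ is exactly the equivalence between \cref{fixed_point} and \cref{VIE} that the paper sets up in \cref{sec:re-l2}), so yours is not shorter; what it buys is an explicit representation and a quantitative bound $\norm{(I_{X^{+}} - \mathcal{F} V^{+})^{-1}}_{X^{+}\leftarrow X^{+}} \leq 1 + \kernorm{R}_{L^{2}([0,h])}$, avoiding the open mapping theorem, at the cost of the change-of-variables bookkeeping you yourself flag. The only point to keep an eye on is the sign convention in the resolvent equation versus the variation-of-constants formula \cref{VIE-sol}; as long as you use \cref{VIE-sol} as stated, your formula $w = g + \mathcal{R}g$ is consistent with the paper.
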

\begin{proof}
The invertibility of $I_{X^{+}} - \mathcal{F} V^{+}$ follows from \cref{existence-uniqueness-RE}.
Its boundedness is clear from the boundedness of $I_{X^{+}}$, $V^{+}$ and $\mathcal{F}$, the latest being obtained via the Cauchy--Schwarz inequality and observing that $C$ is also $L^2$.
The bounded inverse theorem completes the proof.
\end{proof}

\begin{corollary}\label{T-bounded}
Assuming \cref{hyp-C}, $T$ in \cref{Tsh} is bounded.
\end{corollary}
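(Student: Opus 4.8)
The plan is to realize $T$ explicitly as a composition of bounded operators, so that boundedness follows from that of its factors. Recalling the reformulation~\cref{T-as-V}, for $\phi\in X$ we have $T\phi = V(\phi,w^{\ast})_{h}$, where $w^{\ast}\in X^{+}$ is the unique solution of~\cref{fixed_point} provided by~\cref{bounded-inverse-fp-eq}. First I would make the dependence of $w^{\ast}$ on $\phi$ explicit: rewriting~\cref{fixed_point} as~\cref{fixed-point-2} and inverting gives
\[
w^{\ast} = (I_{X^{+}} - \mathcal{F} V^{+})^{-1}\mathcal{F} V^{-}\phi,
\]
where $(I_{X^{+}} - \mathcal{F} V^{+})^{-1}$ is bounded by~\cref{bounded-inverse-fp-eq}. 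Since $V^{-}\colon X\to X^{\pm}$ is bounded (it is an isometry onto its range) and $\mathcal{F}\colon X^{\pm}\to X^{+}$ is bounded under~\cref{hyp-C} (by the Cauchy--Schwarz inequality, using that $C$ is also $L^{2}$, exactly as in the proof of~\cref{bounded-inverse-fp-eq}), the assignment $\phi\mapsto w^{\ast}$ is a bounded linear operator from $X$ to $X^{+}$.

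It remains to handle the extraction of the history segment at time $h$. I would introduce the segment operator $\mathcal{S}_{h}\colon X^{\pm}\to X$, $\mathcal{S}_{h}v \coloneqq v_{h}$, that is $(\mathcal{S}_{h}v)(\theta) = v(h+\theta)$ for $\theta\in[-\tau,0]$. A change of variable gives
\[
\norm{\mathcal{S}_{h}v}^{2}_{X} = \int_{h-\tau}^{h}\abs{v(u)}^{2}\D u \leq \norm{v}^{2}_{X^{\pm}},
\]
since $[h-\tau,h]\subseteq[-\tau,h]$ for every $h>0$; hence $\mathcal{S}_{h}$ is bounded with norm at most $1$. Combining this with the decomposition~\cref{decomp_V}, $T\phi = \mathcal{S}_{h}V(\phi,w^{\ast}) = \mathcal{S}_{h}V^{-}\phi + \mathcal{S}_{h}V^{+}w^{\ast}$, so that
\[
T = \mathcal{S}_{h}V^{-} + \mathcal{S}_{h}V^{+}(I_{X^{+}} - \mathcal{F} V^{+})^{-1}\mathcal{F} V^{-}.
\]
As $\mathcal{S}_{h}$, $V^{-}$ and $V^{+}$ are bounded and the middle factor is bounded by the previous paragraph, $T$ is a finite sum of compositions of bounded operators, hence bounded.

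There is no genuine obstacle here: the statement is essentially bookkeeping, assembling the boundedness of building blocks already available. The only verifications needed are the elementary $L^{2}$ estimate for $\mathcal{S}_{h}$ and the boundedness of $\mathcal{F} V^{-}$ via Cauchy--Schwarz under~\cref{hyp-C}, neither of which is delicate; in particular~\cref{FsVp-to-AC,FXsVm-to-AC} are not required for this \lcnamecref{T-bounded}.
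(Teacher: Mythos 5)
Your proposal is correct and follows essentially the same route as the paper: both decompose $T\phi=(V^{-}\phi)_{h}+(V^{+}w^{\ast})_{h}$ via \cref{T-as-V,decomp_V}, write $w^{\ast}=(I_{X^{+}}-\mathcal{F}V^{+})^{-1}\mathcal{F}V^{-}\phi$ from \cref{fixed-point-2}, and conclude from \cref{bounded-inverse-fp-eq} together with the boundedness of $V^{-}$, $V^{+}$, $\mathcal{F}$ and of the segment-extraction map (whose norm bound $\norm{v_{h}}_{X}\leq\norm{v}_{X^{\pm}}$ the paper states directly rather than naming an operator $\mathcal{S}_{h}$). Your observation that only \cref{hyp-C} is needed, and not \cref{FsVp-to-AC,FXsVm-to-AC}, matches the statement of the \lcnamecref{T-bounded}.
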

\begin{proof}
Thanks to \cref{T-as-V,decomp_V} we have $T\phi=(V^-\phi)_h+(V^+w^*)_h$ and, recalling \cref{fixed-point-2} and using \cref{bounded-inverse-fp-eq}, $w^*=(I_{X^+}-\mathcal{F}V^+)^{-1}\mathcal{F}V^-\phi$.
Observe that $\norm{(V^-\phi)_h}_{X}\leq \norm{\phi}_{X}$ and $\norm{(V^+w^*)_h}_{X}\leq \norm{w^*}_{X^+}$.
The thesis follows from \cref{bounded-inverse-fp-eq} and the boundedness of $V^-$ and $\mathcal{F}$ (also shown in the proof of \cref{bounded-inverse-fp-eq}).
\end{proof}

\begin{proposition}\label{cont_coll_op-invertible}
Assuming \cref{hyp-C,FsVp-to-AC}, there exists a positive integer $N_{0}$ such that, for any $N \geq N_{0}$, the operator~\cref{cont_coll_op} is invertible and
\begin{equation*}
\norm{(I_{X^{+}} - \mathcal{L}_{N}^{+} \mathcal{F} V^{+})^{- 1}}_{X^{+} \leftarrow X^{+}} \leq 2 \norm{(I_{X^{+}} - \mathcal{F} V^{+})^{- 1}}_{X^{+} \leftarrow X^{+}}.
\end{equation*}
Moreover, for each $\phi \in X$, \cref{collocation_hat} has a unique solution $w^{\ast}_{N} \in X^{+}$ and
\begin{equation*}
\norm{w^{\ast}_{N} - w^{\ast}}_{X^{+}} \leq 2 \norm{(I_{X^{+}} - \mathcal{F} V^{+})^{- 1}}_{X^{+} \leftarrow X^{+}} \norm{\mathcal{L}_{N}^{+} w^{\ast} - w^{\ast}}_{X^{+}},
\end{equation*}
where $w^{\ast} \in X^{+}$ is the unique solution of~\cref{fixed_point}.
\end{proposition}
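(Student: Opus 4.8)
The plan is to treat $I_{X^{+}} - \mathcal{L}_{N}^{+}\mathcal{F}V^{+}$ as a perturbation of the invertible operator $I_{X^{+}} - \mathcal{F}V^{+}$ supplied by \cref{bounded-inverse-fp-eq}. The starting point is the algebraic identity
\begin{equation*}
I_{X^{+}} - \mathcal{L}_{N}^{+}\mathcal{F}V^{+} = (I_{X^{+}} - \mathcal{F}V^{+})\bigl[I_{X^{+}} - (I_{X^{+}} - \mathcal{F}V^{+})^{-1}(\mathcal{L}_{N}^{+} - I_{X^{+}})\mathcal{F}V^{+}\bigr],
\end{equation*}
which reduces invertibility to showing that the bracketed factor is invertible. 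By a Neumann series argument, this holds as soon as the product $\norm{(I_{X^{+}}-\mathcal{F}V^{+})^{-1}}_{X^{+}\leftarrow X^{+}}\,\norm{(\mathcal{L}_{N}^{+} - I_{X^{+}})\mathcal{F}V^{+}}_{X^{+}\leftarrow X^{+}}$ is strictly less than $1$.

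Thus the heart of the matter is to prove that $\norm{(\mathcal{L}_{N}^{+} - I_{X^{+}})\mathcal{F}V^{+}}_{X^{+}\leftarrow X^{+}} \to 0$ as $N\to\infty$. Here \cref{FsVp-to-AC} enters: since $\mathcal{F}V^{+}$ maps $X^{+}$ boundedly into $H^{1,+}$, I would factor
\begin{equation*}
\norm{(\mathcal{L}_{N}^{+} - I_{X^{+}})\mathcal{F}V^{+}}_{X^{+}\leftarrow X^{+}} \leq \norm{\mathcal{L}_{N}^{+} - I_{X^{+}}}_{X^{+}\leftarrow H^{1,+}}\,\norm{\mathcal{F}V^{+}}_{H^{1,+}\leftarrow X^{+}},
\end{equation*}
where the second factor is finite by \cref{FsVp-to-AC}. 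The first factor is the $L^{2}$-error of Lagrange interpolation at the zeros of $\psi_{N}^{+}$ (Gauss--Legendre nodes), measured on the unit ball of $H^{1,+}$; by the Sobolev embedding $H^{1,+}\subset C([0,h],\mathbb{R}^{d})$ the interpolant is well defined, and the relevant spectral interpolation estimate gives $\norm{\mathcal{L}_{N}^{+} - I_{X^{+}}}_{X^{+}\leftarrow H^{1,+}}\to 0$. Establishing (or locating a citable form of) this interpolation bound in the present $L^{2}$ setting is the main obstacle, as it is precisely where the change of state space relative to \cite{BredaLiessi2018} must be accounted for.

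Granting this, I would fix $N_{0}$ so that the displayed product is at most $1/2$ for all $N\geq N_{0}$; the Neumann series then yields invertibility of the bracketed factor with inverse of norm at most $2$, whence $I_{X^{+}}-\mathcal{L}_{N}^{+}\mathcal{F}V^{+}$ is invertible and the claimed bound $\norm{(I_{X^{+}}-\mathcal{L}_{N}^{+}\mathcal{F}V^{+})^{-1}}_{X^{+}\leftarrow X^{+}}\leq 2\norm{(I_{X^{+}}-\mathcal{F}V^{+})^{-1}}_{X^{+}\leftarrow X^{+}}$ follows. Unique solvability of \cref{collocation_hat} is then immediate: using \cref{decomp_V} and \cref{RP-PR-p} it is equivalent to $(I_{X^{+}}-\mathcal{L}_{N}^{+}\mathcal{F}V^{+})w = \mathcal{L}_{N}^{+}\mathcal{F}V^{-}\phi$, and invertibility produces the unique solution $w^{\ast}_{N}$.

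Finally, for the error bound I would derive the quasi-optimality identity
\begin{equation*}
w^{\ast}_{N} - w^{\ast} = (I_{X^{+}}-\mathcal{L}_{N}^{+}\mathcal{F}V^{+})^{-1}(\mathcal{L}_{N}^{+} w^{\ast} - w^{\ast}).
\end{equation*}
This is obtained by applying $\mathcal{L}_{N}^{+}$ to the exact relation $w^{\ast} = \mathcal{F}V^{-}\phi + \mathcal{F}V^{+}w^{\ast}$ and subtracting the collocation equation $w^{\ast}_{N} = \mathcal{L}_{N}^{+}\mathcal{F}V^{-}\phi + \mathcal{L}_{N}^{+}\mathcal{F}V^{+}w^{\ast}_{N}$: one first gets $(I_{X^{+}}-\mathcal{L}_{N}^{+}\mathcal{F}V^{+})(w^{\ast}_{N} - \mathcal{L}_{N}^{+} w^{\ast}) = \mathcal{L}_{N}^{+}\mathcal{F}V^{+}(\mathcal{L}_{N}^{+} w^{\ast} - w^{\ast})$, and then adding $\mathcal{L}_{N}^{+} w^{\ast} - w^{\ast}$ and regrouping through $(I_{X^{+}}-\mathcal{L}_{N}^{+}\mathcal{F}V^{+})$ collapses to the identity above. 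Taking norms and inserting the bound from the first part yields the stated estimate $\norm{w^{\ast}_{N}-w^{\ast}}_{X^{+}}\leq 2\norm{(I_{X^{+}}-\mathcal{F}V^{+})^{-1}}_{X^{+}\leftarrow X^{+}}\norm{\mathcal{L}_{N}^{+} w^{\ast} - w^{\ast}}_{X^{+}}$.
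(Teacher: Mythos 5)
Your proposal is correct and follows essentially the same route as the paper: a Neumann-series (Banach lemma) perturbation of the invertible $I_{X^{+}}-\mathcal{F}V^{+}$ from \cref{bounded-inverse-fp-eq}, the factorization of the error operator through $H^{1,+}$ via \cref{FsVp-to-AC}, and the quasi-optimality identity for the error bound — the paper simply outsources these algebraic steps to \cite[Proposition~4.3]{BredaLiessi2018}. The one ingredient you flag as the remaining obstacle, namely $\norm{(\mathcal{L}_{N}^{+}-I)\restriction_{H^{1,+}}}_{X^{+}\leftarrow H^{1,+}}\to 0$, is exactly what the paper supplies by citing the Gauss-node interpolation estimate \cite[(9.4.24)]{CanutoHussainiQuarteroniZang1988}, which gives the rate $c(N-1)^{-1/2}$ in \cref{LN-I}.
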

\begin{proof}
By~\cite[(9.4.24)]{CanutoHussainiQuarteroniZang1988}, if $w \in H^{1,+}$, there exists a constant $c$ such that
$\norm{(\mathcal{L}_{N}^{+} - I) w}_{X^{+}} \leq c (N-1)^{-\frac{1}{2}}\norm{w}_{H^{1,+}}$, which implies that
\begin{equation}\label{LN-I}
\norm{(\mathcal{L}_{N}^{+} - I) \restriction_{H^{1,+}}}_{X^{+} \leftarrow H^{1,+}} \leq c (N-1)^{-\frac{1}{2}}\xrightarrow[N \to +\infty]{} 0.
\end{equation}%
The proof continues as in \cite[Proposition 4.3]{BredaLiessi2018}, with the only difference that here we use \cref{bounded-inverse-fp-eq} for the existence and boundedness of the inverse of $I_{X^{+}} - \mathcal{F} V^{+}$.
\end{proof}

Under \cref{hyp-C,FsVp-to-AC} we conclude that \cref{discrete_FP} is well posed and hence the discretization of $T$ defined in \cref{TMNbold} is well defined.
As a last remark, we translate \cref{FsVp-to-AC,FXsVm-to-AC} in terms of requirements on the kernel $C$ defining the RE.
Indeed, using the characterization of $H^{1,+}$ given by \cite[Proposition 8.3]{Brezis2011}, it is possible to prove that \cref{FsVp-to-AC,FXsVm-to-AC} are fulfilled if the following condition on the kernel $C$ of \cref{RE} holds:
\begin{quote}
the function $t\mapsto C(s+t,\sigma-t)$ satisfies the condition (ii) of \cite[Proposition 8.3]{Brezis2011} uniformly with respect to $\sigma\in[-\tau,h]$, i.e., there exists a constant $k$ such that for each $\phi \in  C^1_c((0,h))$ and almost all $\sigma \in[-\tau,h]$
\begin{equation*}
\abs[\Big]{\int_0^h C(s+t,\sigma-t)\phi'(t)\D t}\leq k\norm{\phi}_{X^+}.
\end{equation*}
\end{quote}
Thanks again to the characterization given by \cite[Proposition 8.3]{Brezis2011}, for a fixed $\sigma$ this condition is equivalent to $t\mapsto C(s+t,\sigma-t)$ being in $H^{1,+}$.

\subsection{Convergence}
\label{sec:conv-op}

In view of discussing the error in approximating the LEs in \cref{sec:conv-exp} we provide here a convergence analysis of the discretization of $T$ introduced in \cref{sec:discr}.
The operators $T$ and $\mathbf{T}_{M, N}$ are defined on different spaces and cannot be compared directly.
We thus introduce the finite-rank operator $T_{M, N} \colon X \to X$ associated to $\mathbf{T}_{M, N}$, defined as $T_{M, N} \coloneqq P_{M} \mathbf{T}_{M, N} R_{M}$.

Let $h\geq \tau$.
Define the operator $T_{N} \colon X \to X$ as
\begin{equation*}
T_{N} \phi \coloneqq V(\phi, w^{\ast}_{N})_{h},
\end{equation*}
where $w^{\ast}_{N} \in X^{+}$ is the solution of the fixed point equation~\cref{collocation_hat}, which, under \cref{hyp-C,FsVp-to-AC}, exists and is unique thanks to \cref{discr-cont-coll_eq,cont_coll_op-invertible}.
Observe that $w^{\ast}_{N}$ is a polynomial, hence, in particular, $w^{\ast}_{N} \in H^{1,+}$.
Then, for $\phi \in X$, by~\cref{stars-vs-hats},
\begin{equation}\label{TMN-with-FM}
\begin{split}
T_{M, N} \phi
&= P_{M} \mathbf{T}_{M, N} R_{M} \phi \\
&= P_{M} R_{M} V(P_{M} R_{M} \phi, P_{N}^{+} W^{\ast})_{h} \\
&= F_{M} V(F_{M} \phi, w^{\ast}_{N})_{h} \\
&= F_{M} T_{N} F_{M} \phi,
\end{split}
\end{equation}
where $W^{\ast} \in X_{N}^{+}$ and $w^{\ast}_{N} \in H^{1,+}$ are the solutions, respectively, of~\cref{discrete_FP} applied to $\Phi = R_{M} \phi$ and of~\cref{collocation_hat} with $F_{M} \phi$ replacing $\phi$.
These solutions are unique under \cref{hyp-C,FsVp-to-AC}, thanks again to \cref{discr-cont-coll_eq,cont_coll_op-invertible}.

Before proving the norm convergence of $T_{N}$ to $T$, we need to extend the results of \cref{existence-uniqueness-RE} to $H^{1,+}$ in the following lemma.

\begin{lemma}\label{LNpFXsVXp-invertible}
If \cref{hyp-C,FsVp-to-AC} hold, then $(I_{X^{+}} - \mathcal{F} V^{+}) \restriction_{H^{1,+}}$ is invertible with bounded inverse.
\end{lemma}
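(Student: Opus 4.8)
The plan is to leverage \cref{bounded-inverse-fp-eq}, which already provides invertibility with bounded inverse on the larger space $X^{+}$, and to bootstrap the regularity of solutions up into $H^{1,+}$ using \cref{FsVp-to-AC}. For brevity write $A \coloneqq I_{X^{+}} - \mathcal{F} V^{+}$.

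First I would verify that $A$ maps $H^{1,+}$ into itself and is bounded as an operator on $H^{1,+}$. For $w \in H^{1,+} \subset X^{+}$ we have $Aw = w - \mathcal{F} V^{+} w$; the first summand lies in $H^{1,+}$ by assumption, and the second lies in $H^{1,+}$ by \cref{FsVp-to-AC}, so $Aw \in H^{1,+}$. Boundedness then follows from $\norm{Aw}_{H^{1,+}} \leq \norm{w}_{H^{1,+}} + \norm{\mathcal{F} V^{+} w}_{H^{1,+}} \leq \bigl(1 + \norm{\mathcal{F} V^{+}}_{H^{1,+} \leftarrow X^{+}}\bigr)\norm{w}_{H^{1,+}}$, where the second inequality uses \cref{FsVp-to-AC} together with $\norm{\cdot}_{X^{+}} \leq \norm{\cdot}_{H^{1,+}}$.

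Next comes bijectivity of $A \restriction_{H^{1,+}}$ as a map $H^{1,+} \to H^{1,+}$. Injectivity is inherited for free from \cref{bounded-inverse-fp-eq}, since $A$ is already injective on the larger space $X^{+}$. For surjectivity, given $f \in H^{1,+}$ I would take the unique $w = A^{-1} f \in X^{+}$ provided by \cref{bounded-inverse-fp-eq} and show it actually belongs to $H^{1,+}$: rearranging $Aw = f$ gives $w = f + \mathcal{F} V^{+} w$, whose right-hand side is in $H^{1,+}$ because $f \in H^{1,+}$ and $\mathcal{F} V^{+} w \in H^{1,+}$ again by \cref{FsVp-to-AC}. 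Hence $w \in H^{1,+}$ and $A \restriction_{H^{1,+}}$ is a bijection of $H^{1,+}$.

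Finally, for the bounded inverse I would read off an explicit estimate from the same identity $w = f + \mathcal{F} V^{+} w$: taking $H^{1,+}$ norms and using \cref{FsVp-to-AC} together with $\norm{w}_{X^{+}} \leq \norm{A^{-1}}_{X^{+} \leftarrow X^{+}} \norm{f}_{X^{+}} \leq \norm{A^{-1}}_{X^{+} \leftarrow X^{+}} \norm{f}_{H^{1,+}}$ yields $\norm{w}_{H^{1,+}} \leq \bigl(1 + \norm{\mathcal{F} V^{+}}_{H^{1,+} \leftarrow X^{+}} \norm{A^{-1}}_{X^{+} \leftarrow X^{+}}\bigr) \norm{f}_{H^{1,+}}$; alternatively the bounded inverse theorem applies directly to the bounded bijection $A \restriction_{H^{1,+}}$ of the Hilbert space $H^{1,+}$. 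I do not anticipate a serious obstacle: everything hinges on the single regularity input \cref{FsVp-to-AC}, and the only delicate point — that solving the fixed-point equation in $X^{+}$ does not take one outside $H^{1,+}$ — is handled precisely by the bootstrap identity $w = f + \mathcal{F} V^{+} w$.
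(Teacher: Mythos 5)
Your proof is correct and is essentially the argument the paper invokes by citing \cite[Lemma 4.6]{BredaLiessi2018}: the same regularity bootstrap $w = f + \mathcal{F} V^{+} w$ driven by \cref{FsVp-to-AC}, combined with the embedding inequality $\norm{\cdot}_{X^{+}} \leq \norm{\cdot}_{H^{1,+}}$ that the paper singles out as the only adaptation needed. The only difference is that you write the argument out in full, whereas the paper defers to the cited reference.
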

\begin{proof}
The proof is the same as the one of \cite[Lemma 4.6]{BredaLiessi2018}, with the only difference being that $\norm{\cdot}_{X^{+}} \leq \norm{\cdot}_{H^{1,+}}$.
\end{proof}

\begin{proposition}\label{norm-convergence}
If $h\geq \tau$ and \cref{hyp-C,FsVp-to-AC,FXsVm-to-AC} hold, then $\norm{T_{N} - T}_{X \leftarrow X} \to 0$ for $N \to +\infty$.
\end{proposition}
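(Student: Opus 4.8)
The plan is to reduce the operator difference $T - T_N$ to the difference of the two fixed points $w^{\ast}$ and $w^{\ast}_N$, and then to invoke \cref{cont_coll_op-invertible}. Since $h \geq \tau$, for every $\theta \in [-\tau, 0]$ we have $h + \theta \in [h-\tau, h] \subseteq [0,h]$, so the evaluation $V(\phi, w)_h(\theta) = w(h+\theta)$ depends only on the $w$-component. Consequently $T\phi$ and $T_N\phi$ share the same $\phi$-contribution and $(T\phi - T_N\phi)(\theta) = w^{\ast}(h+\theta) - w^{\ast}_N(h+\theta)$; integrating over $[-\tau,0]$ and changing variables gives $\norm{T\phi - T_N\phi}_X \leq \norm{w^{\ast}_N - w^{\ast}}_{X^{+}}$. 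First I would record this reduction, so that it remains to bound $\norm{w^{\ast}_N - w^{\ast}}_{X^{+}}$ uniformly over the unit ball $\norm{\phi}_X \leq 1$.

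Next I would apply \cref{cont_coll_op-invertible}, valid for $N \geq N_0$ under \cref{hyp-C,FsVp-to-AC}, which yields $\norm{w^{\ast}_N - w^{\ast}}_{X^{+}} \leq 2\norm{(I_{X^{+}} - \mathcal{F}V^{+})^{-1}}_{X^{+} \leftarrow X^{+}} \norm{(\mathcal{L}_N^{+} - I)w^{\ast}}_{X^{+}}$. The estimate \cref{LN-I} then bounds $\norm{(\mathcal{L}_N^{+} - I)w^{\ast}}_{X^{+}}$ by $c(N-1)^{-1/2}\norm{w^{\ast}}_{H^{1,+}}$, but only once we know $w^{\ast} \in H^{1,+}$. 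To see this, write the fixed point equation as $w^{\ast} = \mathcal{F}V^{-}\phi + \mathcal{F}V^{+} w^{\ast}$; by \cref{FXsVm-to-AC} the first term lies in $H^{1,+}$ and by \cref{FsVp-to-AC} so does the second, whence $w^{\ast} \in H^{1,+}$.

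The crux of the argument --- and the step I expect to be the main obstacle --- is obtaining a bound on $\norm{w^{\ast}}_{H^{1,+}}$ that is uniform in $\phi$ over the unit ball. The $X^{+}$-level identity $w^{\ast} = (I_{X^{+}} - \mathcal{F}V^{+})^{-1}\mathcal{F}V^{-}\phi$ only controls $\norm{w^{\ast}}_{X^{+}}$, so the plan is to reinterpret the fixed point equation entirely within $H^{1,+}$: by \cref{LNpFXsVXp-invertible} the restriction $(I_{X^{+}} - \mathcal{F}V^{+})\restriction_{H^{1,+}}$ is invertible with bounded inverse on $H^{1,+}$, and by \cref{FXsVm-to-AC} the map $\mathcal{F}V^{-} \colon X \to H^{1,+}$ is bounded. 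Composing these gives $\norm{w^{\ast}}_{H^{1,+}} \leq C' \norm{\phi}_X$ with $C'$ independent of $\phi$ and $N$. Substituting back, $\norm{T\phi - T_N\phi}_X \leq 2\norm{(I_{X^{+}}-\mathcal{F}V^{+})^{-1}}_{X^{+} \leftarrow X^{+}}\, c\,(N-1)^{-1/2} C' \norm{\phi}_X$, and taking the supremum over the unit ball yields $\norm{T_N - T}_{X \leftarrow X} \leq \mathrm{const}\cdot(N-1)^{-1/2} \to 0$. The key point is thus that \cref{FXsVm-to-AC} (unused for well-posedness) together with \cref{LNpFXsVXp-invertible} upgrades the pointwise-in-$\phi$ regularity of $w^{\ast}$ to a uniform $H^{1,+}$ bound, which is exactly what turns the pointwise interpolation estimate into a norm estimate.
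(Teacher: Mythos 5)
Your argument is correct and follows essentially the same route as the paper's proof: reduce $(T_N-T)\phi$ to $V^{+}(w^{\ast}_{N}-w^{\ast})_{h}$, apply \cref{cont_coll_op-invertible} together with \cref{LN-I}, and obtain the uniform bound $\norm{w^{\ast}}_{H^{1,+}}\leq C'\norm{\phi}_{X}$ from \cref{LNpFXsVXp-invertible} and \cref{FXsVm-to-AC}. The step you single out as the crux --- upgrading the $X^{+}$-level control of $w^{\ast}$ to a uniform $H^{1,+}$ bound via the restricted inverse --- is exactly the final step of the paper's proof.
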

\begin{proof}
Let $\phi \in X$ and let $w^{\ast}$ and $w^{\ast}_{N}$ be the solutions of the fixed point equations~\cref{fixed_point,collocation_hat}, respectively.
Recall that $w^{\ast}_{N}$ is a polynomial.
Recalling that $w^{\ast} = \mathcal{F} V^{+} w^{\ast} + \mathcal{F} V^{-} \phi$, it is clear that $w^{\ast} \in H^{1,+}$.
Then $(T_{N} - T) \phi
= V(\phi, w^{\ast}_{N})_{h} - V(\phi, w^{\ast})_{h}
= V^{+}(w^{\ast}_{N} - w^{\ast})_{h}$.
By \cref{cont_coll_op-invertible}, there exists a positive integer $N_{0}$ such that, for any $N \geq N_{0}$,
\begin{equation*}
\begin{aligned}
\norm{(T_{N} - T) \phi}_{X}
&= \norm{V^{+}(w^{\ast}_{N} - w^{\ast})_{h}}_{X} \\
&\leq \norm{w^{\ast}_{N} - w^{\ast}}_{X^{+}} \\
&\leq 2 \norm{(I_{X^{+}} - \mathcal{F} V^{+})^{- 1}}_{X^{+} \leftarrow X^{+}} \norm{\mathcal{L}_{N}^{+} w^{\ast} - w^{\ast}}_{X^{+}} \\
&\leq 2 \norm{(I_{X^{+}} - \mathcal{F} V^{+})^{- 1}}_{X^{+} \leftarrow X^{+}} \norm{(\mathcal{L}_{N}^{+} - I_{X^{+}}) \restriction_{H^{1,+}}}_{X^{+} \leftarrow H^{1,+}} \norm{w^{\ast}}_{H^{1,+}}
\end{aligned}
\end{equation*}
holds by virtue of \cref{LN-I}.
Finally, from \cref{fixed-point-2} we obtain
\begin{equation*}
\norm{w^{\ast}}_{H^{1,+}} \leq \norm{((I_{X^{+}} - \mathcal{F} V^{+}) \restriction_{H^{1,+}})^{- 1}}_{H^{1,+} \leftarrow H^{1,+}} \norm{\mathcal{F} V^{-}}_{H^{1,+} \leftarrow X} \norm{\phi}_{X},
\end{equation*}
which completes the proof thanks to \cref{LNpFXsVXp-invertible}.
\end{proof}

\begin{theorem}\label{final-operator-convergence}
If $h\geq \tau$ and \cref{hyp-C,FsVp-to-AC,FXsVm-to-AC} hold, then $\norm{T_{M,N} - T}_{X \leftarrow X} \to 0$ for $N \to +\infty$ and $M\geq N-1$.
\end{theorem}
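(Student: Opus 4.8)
The plan is to leverage the factorization $T_{M, N} = F_{M} T_{N} F_{M}$ established in \cref{TMN-with-FM} together with the norm convergence $\norm{T_{N} - T}_{X \leftarrow X} \to 0$ from \cref{norm-convergence}, and to absorb the remaining discrepancy between $F_{M}$ and $I_{X}$ using the compactness of $T$. First I would simplify the factorization using the standing hypotheses $h \geq \tau$ and $M \geq N-1$. For $\phi \in X$, since $h \geq \tau$ one has $T_{N}\phi(\theta) = w^{\ast}_{N}(h+\theta)$ for $\theta \in [-\tau,0]$, and $w^{\ast}_{N}$ is a polynomial of degree at most $N-1$ (being in the range of $P_{N}^{+}$); hence every element of the range of $T_{N}$ is a polynomial of degree at most $N-1$ on $[-\tau,0]$. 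As $F_{M}$ is the Legendre/Fourier projection onto polynomials of degree at most $M$ and $M \geq N-1$, it acts as the identity on the range of $T_{N}$, so $F_{M} T_{N} = T_{N}$ and consequently $T_{M,N} = T_{N} F_{M}$.

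With this reduction I would telescope $T_{M,N} - T = (T_{N} - T) F_{M} + T(F_{M} - I_{X})$ and estimate the two summands separately. Since the Legendre polynomials form a complete orthogonal system in $X$, $F_{M}$ is an orthogonal projection, so $\norm{F_{M}}_{X \leftarrow X} \leq 1$ and $F_{M} \to I_{X}$ strongly. The first summand is then controlled by $\norm{(T_{N} - T) F_{M}}_{X \leftarrow X} \leq \norm{T_{N} - T}_{X \leftarrow X}$, which tends to $0$ as $N \to +\infty$ by \cref{norm-convergence}; note that $M \geq N-1$ forces $M \to +\infty$ as well, as required by the second summand.

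The main obstacle is the second summand $\norm{T(F_{M} - I_{X})}_{X \leftarrow X}$: because $F_{M} \to I_{X}$ only strongly and not in norm, this term does not vanish for an arbitrary bounded operator, and here it is precisely the compactness of $T$ (guaranteed by \cref{compactness}, since $h \geq \tau$ and \cref{hyp-C} holds) that saves the day. The subtlety is that the usual argument — strong convergence composed with a compact operator yields norm convergence — applies when the strongly convergent factor stands to the left of the compact operator, whereas here it stands to the right. I would therefore pass to adjoints: as $F_{M}$ is self-adjoint, $\norm{T(F_{M} - I_{X})}_{X \leftarrow X} = \norm{(F_{M} - I_{X}) T^{\ast}}_{X \leftarrow X}$, and $T^{\ast}$ is compact by Schauder's theorem. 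Since $F_{M} \to I_{X}$ strongly with $\sup_{M} \norm{F_{M} - I_{X}}_{X \leftarrow X} < +\infty$, the convergence is uniform on the relatively compact image of the unit ball under $T^{\ast}$, whence $\norm{(F_{M} - I_{X}) T^{\ast}}_{X \leftarrow X} \to 0$ as $M \to +\infty$. Adding the two estimates gives $\norm{T_{M,N} - T}_{X \leftarrow X} \to 0$.
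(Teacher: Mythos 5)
Your proof is correct, and at the decisive step it takes a genuinely different---and in fact sounder---route than the paper. Both arguments start from the factorization $T_{M,N}=F_{M}T_{N}F_{M}$ of \cref{TMN-with-FM}, both use that for $h\geq\tau$ and $M\geq N-1$ the range of $T_{N}$ consists of polynomials of degree at most $N-1$ so that the outer $F_{M}$ can be dropped, and both dispose of the $T_{N}-T$ contribution via \cref{norm-convergence}. The difference lies in the residual term in which $F_{M}-I_{X}$ is applied \emph{before} a bounded operator: the paper keeps $T_{N}(F_{M}-I_{X})$ and concludes by asserting that strong convergence of $F_{M}$ to $I_{X}$ together with Banach--Steinhaus yields $\norm{F_{M}-I_{X}}_{X\leftarrow X}\to 0$; this cannot hold, since $I_{X}-F_{M}$ is a nonzero orthogonal projection and its operator norm is identically $1$, so the paper's argument as written only delivers strong (pointwise) convergence of $T_{M,N}$ to $T$. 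You instead isolate $T(F_{M}-I_{X})$, correctly identify that strong convergence sitting to the right of a bounded operator is insufficient, and close the gap by the compactness of $T$ from \cref{compactness} via adjoints: $\norm{T(F_{M}-I_{X})}_{X\leftarrow X}=\norm{(F_{M}-I_{X})T^{\ast}}_{X\leftarrow X}\to 0$ because $T^{\ast}$ is compact (Schauder) and $F_{M}\to I_{X}$ strongly and boundedly, hence uniformly on the relatively compact image of the unit ball under $T^{\ast}$. This is precisely the ingredient needed to obtain convergence in operator norm, which is what the subsequent Lyapunov-exponent analysis requires, and its only cost is the compactness hypothesis $h\geq\tau$, which is already in force. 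The auxiliary facts you invoke ($\norm{F_{M}}_{X\leftarrow X}\leq 1$, self-adjointness of $F_{M}$, $M\geq N-1$ forcing $M\to+\infty$) are all correct.
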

\begin{proof}
From \cref{TMN-with-FM} we have that
\begin{equation*}
T_{M,N}-T = F_{M} T_{N} (F_{M}-I_X)+(F_{M}-I_{X})T_{N} +(T_{N}-T).
\end{equation*}
Passing to the norms, the last addend converges to $0$ as $N\to +\infty$, thanks to \cref{norm-convergence}.
The second addend is zero since $M\geq N-1$, because for $h\geq \tau$ the operator $T_N$ has range in the space of polynomials of degree at most $N-1$.
In the first addend, the first $F_M$ is applied again to polynomials of degree at most $N-1\leq M$ and thus can be neglected.
Observe that $T_N$ is bounded, thanks to \cref{T-bounded,norm-convergence}.
From the convergence of the Fourier projection in $L^2$, if $\phi\in X$, then $\norm{(F_{M}- I_X) \phi}_{X} \to 0$ for $M \to +\infty$.
By the Banach--Steinhaus theorem, the sequence $\norm{F_{M} - I_X}_{X \leftarrow X}$ is bounded, hence
\begin{equation*}
\norm{F_{M} - I_X}_{X\leftarrow X} \xrightarrow[M \to +\infty]{} 0,
\end{equation*}
which completes the proof.
\end{proof}

\section{Lyapunov exponents}
\label{sec:le}

\subsection{Definition}
\label{sec:defLE}

In order to define and characterize the LEs for REs of type \cref{RE} we resort to the same arguments used in \cite{Breda2010,BredaVanVleck2014} for DDEs, which we summarize next.

The starting point is the key result \cite[Corollary 2.2]{Ruelle1982}, which extends to compact operators in infinite-dimensional Hilbert spaces the celebrated Oseledets's multiplicative ergodic theorem \cite{Oseledets1968} for finite-dimensional dynamical systems (see also \cite{ChiconeLatushkin1999,EdenFoiasTemam1991}).
Based on this result, (eventual) compactness of the evolution family $\{T(t,s)\}_{t\geq s}$ for $T(t,s)$ introduced in \cref{T} ensures the existence of a possibly infinite sequence of LEs of \cref{RE}.
In particular, they are defined as the logarithm of the eigenvalues of the operator $\Lambda\colon X\to X$ given by%
\footnote{Let us remark that this definition relies upon an ergodicity assumption considered in \cite[Corollary 2.2]{Ruelle1982}.
This condition guarantees regularity of the underlying dynamical system, which in turns gives the LEs as exact limits.
Other definitions based on $\liminf$ and $\limsup$ can be invoked, as is implicitly the case of the following characterization.
For a more detailed discussion on these aspects we refer to \cite[Section 6]{Breda2010} and to the references therein.}
\begin{equation*}
\Lambda\coloneqq\lim_{n\to+\infty}[T(s+n\tau,s)^{\ast}T(s+n\tau,s)]^{1/2n}.
\end{equation*}
Recall that in \cref{sec:compactness} we proved that $T(s+h,s)$ is compact for any $h\geq\tau$.
Here we characterize the LEs through an orthogonal factorization of the evolution family.
This characterization follows the standard DQR technique developed for linear and nonautonomous ODEs \cite{DieciVanVleck2005,DieciVanVleck2006,DieciVanVleck2008}, extended to the infinite-dimensional state space $X$ (which is indeed a separable Hilbert space).

Let $\{t_{n}\}_{n\in\mathbb{N}}$ be given through $t_{n}\coloneqq s+n\tau$ and define for brevity
\begin{equation}\label{Tninf}
T_{n}\coloneqq T(t_{n+1},t_{n}).
\end{equation}
Starting from a given (randomly chosen \cite{BenettinGalganiGiorgilliStrelcyn1980a,DieciElia2006}) unitary operator $Q_{0}\colon X\to X$, iteratively compute the sequence of QR factorizations%
\footnote{\label{convention}%
Throughout the paper we assume that the upper triangular factor of any QR factorization has non-negative diagonal elements, guaranteeing uniqueness of the QR factorization.
We also assume the convention $\log(0)=-\infty$.}
\begin{equation}\label{DQRinf}
Q_{n+1}R_{n}=T_{n}Q_{n}.
\end{equation}
Let $[R_{n}]_{ii}$, $i\in\mathbb{N}$, denote the $i$-th diagonal entry of the $n$-th triangular factor, according to the basis of Legendre polynomials in $X$.
The Lyapunov spectrum
\begin{equation}\label{spLEinf}
\Sigma_{\text{LE}}=\bigcup_{i=0}^{+\infty}[\alpha_{i},\beta_{i}]
\end{equation}
is characterized by the lower and upper LEs
\begin{equation}\label{LEinf}
\alpha_{i}\coloneqq\liminf_{n\to+\infty}\frac{1}{t_{n}}\sum_{k=0}^{n-1}\log([R_{k}]_{ii}),\quad\beta_{i}\coloneqq\limsup_{n\to+\infty}\frac{1}{t_{n}}\sum_{k=0}^{n-1}\log([R_{k}]_{ii}).
\end{equation}
This characterization provides the basis for the numerical approximation of (a number of) Lyapunov spectral intervals.
Indeed, the proposed approach relies on truncating the limits in \cref{LEinf} to a sufficiently large $n$ and on substituting each $T_{n}$ in the DQR sequence \cref{DQRinf} with a suitable finite-dimensional approximation.
This approximation is obtained via collocation and Fourier projection as illustrated in \cref{sec:discr}.
Note that the same technique can be used to approximate the Sacker--Sell spectrum (as done for DDEs in \cite{BredaVanVleck2014}, to which we refer for the relevant details).

\subsection{Approximation}
\label{sec:apprLE}

Consider the sequence of evolution operators $T_{n}\colon X\to X$ in \cref{Tninf} and the iterative QR factorization \cref{DQRinf}.
Given $N\in\mathbb{N}$ and $M\geq N-1$, for ease of notation let
\begin{equation}\label{Tnbar}
\overline{T}_{n}\coloneqq(T_{n})_{M,N}\colon X\to X
\end{equation}
be the infinite-dimensional finite-rank approximation of $T_{n}$ obtained as described in \cref{sec:discr}.
Consider the relevant iterative QR factorization
\begin{equation}\label{DQRbar}
\overline{Q}_{n+1}\overline{R}_{n}=\overline{T}_{n}\overline{Q}_{n}.
\end{equation}
Finally, let $\overline{\mathbf{T}}_{n}\coloneqq X_{M}\to X_{M}$ be the matrix corresponding to $\overline{T}_{n}$ as introduced in \cref{TMNbold} and consider the finite-dimensional iterative QR factorization
\begin{equation}\label{DQRbold}
\overline{\mathbf{Q}}_{n+1}\overline{\mathbf{R}}_{n}=\overline{\mathbf{T}}_{n}\overline{\mathbf{Q}}_{n}.
\end{equation}
The three different QR sequences \cref{DQRinf}, \cref{DQRbar} and \cref{DQRbold} lead, respectively, to the three Lyapunov spectra \cref{spLEinf},
\begin{equation}\label{spLEbar}
\overline{\Sigma}_{{\rm LE}}=\bigcup_{i=0}^{+\infty}[\overline{\alpha}_{i},\overline{\beta}_{i}],
\end{equation}
and
\begin{equation}\label{spLEbold}
\overline{\bm{\Sigma}}_{{\rm LE}}=\bigcup_{i=0}^{d(M+1)-1}[\overline{\bm{\alpha}}_{i},\overline{\bm{\beta}}_{i}]
\end{equation}
characterized, respectively, by \cref{LEinf},
\begin{equation}\label{LEbar}
\overline{\alpha}_{i}=\liminf_{n\to+\infty}\frac{1}{t_{n}}\sum_{k=0}^{n-1}\log([\overline{R}_{k}]_{ii}),\quad\overline{\beta}_{i}=\limsup_{n\to+\infty}\frac{1}{t_{n}}\sum_{k=0}^{n-1}\log([\overline{R}_{k}]_{ii}),
\end{equation}
and
\begin{equation}\label{LEbold}
\overline{\bm{\alpha}}_{i}=\liminf_{n\to+\infty}\frac{1}{t_{n}}\sum_{k=0}^{n-1}\log([\overline{\mathbf{R}}_{k}]_{ii}),\quad\overline{\bm{\beta}}_{i}=\limsup_{n\to+\infty}\frac{1}{t_{n}}\sum_{k=0}^{n-1}\log([\overline{\mathbf{R}}_{k}]_{ii}).
\end{equation}
We refer to \cref{LEinf} as the \emph{exact} (upper and lower) LEs (i.e., for $T_{n}$ on $X$), to \cref{LEbar} as the \emph{approximated} LEs (i.e., for $\overline{T}_{n}$ on $X$) and to \cref{LEbold} as the \emph{computable} LEs (i.e., for $\overline{\mathbf{T}}_{n}$ on $X_{M}$).
Indeed, truncating the limits in \cref{LEbold} gives rise to actually computable quantities, whose convergence to their exact counterpart is discussed in the following \lcnamecref{sec:conv-exp}.

\subsection{Convergence}
\label{sec:conv-exp}

To relate the computable exponents \cref{LEbold} to (some of) the exact ones \cref{LEinf} we follow the same steps of the analysis in \cite[Section 6]{BredaVanVleck2014}.
Here we restrict to summarizing only the main ideas, avoiding a detailed repetition, especially for the most technical parts.
Indeed, the whole analysis is unchanged since it is based on the underlying evolution family $\{T(t,s)\}_{t\geq s}$ and, in particular, on the discrete-time sequence $\{T_{n}\}_{n\in\mathbb{N}}$ introduced in \cref{Tninf}.

We start by showing that the computable exponents \cref{LEbold} coincide with the non-degenerate%
\footnote{Degenerate exponents are those at $-\infty$, see also the end of \cref{convention}.}
ones in \cref{LEbar}.
Then the non-degenerate exponents in \cref{LEbar} approximate finitely-many exact exponents \cref{LEinf}, with relevant errors depending on $\norm{\overline{T}_{n}-T_{n}}_{X \leftarrow X}$, strongly motivating the analysis developed in \cref{sec:conv-op}, see in particular \cref{final-operator-convergence}.

In what follows, by using the basis of Legendre polynomials in $X$, we describe a linear operator $B\colon X\to X$ block-wise as
\begin{equation*}
B=\begin{pmatrix}
(B)_{11}&(B)_{12}\\
(B)_{21}&(B)_{22}
\end{pmatrix},
\end{equation*}
where $(B)_{11}\in(\mathbb{R}^{d})^{(M+1)\times(M+1)}$, $(B)_{12}\in(\mathbb{R}^{d})^{(M+1)\times\infty}$, $(B)_{21}\in(\mathbb{R}^{d})^{\infty\times(M+1)}$ and $(B)_{22}\in(\mathbb{R}^{d})^{\infty\times\infty}$.
Accordingly, blocks of zeros and identity blocks will be denoted respectively as $0_{ij}$ and $I_{ij}$, $i,j\in\{1,2\}$.
Above, $M\in\mathbb{N}$ implicitly refers to the one used to construct $\overline{T}_{n}$ in \cref{Tnbar}.
Recall that $\overline{T}_{n}$ has finite rank since $h=\tau$.
Consequently,
\begin{equation*}
\overline{T}_{n}=
\begin{pmatrix}
\overline{\mathbf{T}}_{n}&0_{12}\\
0_{21}&0_{22}
\end{pmatrix},
\end{equation*}
leading to the following result.
\begin{lemma}[\protect{\cite[Lemma 1]{BredaVanVleck2014}}]
Let $\overline{Q}_{0}\colon X\to X$ be unitary.
If $\overline{\mathbf{T}}_{0}(\overline{Q}_{0})_{11}$ is invertible, then
\begin{equation*}
\overline{Q}_{n+1}=
\begin{pmatrix}
(\overline{Q}_{n+1})_{11}&0_{12}\\
0_{21}&I_{22}
\end{pmatrix},\quad
\overline{R}_{n+1}=
\begin{pmatrix}
(\overline{R}_{n+1})_{11}&0_{12}\\
0_{21}&0_{22}
\end{pmatrix}
\end{equation*}
for any $n\in\mathbb{N}$.
\end{lemma}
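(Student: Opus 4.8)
The plan is an induction on $n$ that rests on a single structural fact recorded just before the statement: because $h=\tau$ makes $\overline{T}_n$ finite-rank with $\overline{T}_n=\bigl(\begin{smallmatrix}\overline{\mathbf{T}}_n&0\\0&0\end{smallmatrix}\bigr)$, the operator annihilates the infinite tail $V_2$ and maps the finite-dimensional head $V_1$ (the range $F_M X$ of the Fourier projection, of dimension $d(M+1)$, with $X=V_1\oplus V_2$ orthogonally) into itself, acting there as $\overline{\mathbf{T}}_n$. The induction hypothesis I would carry is exactly the asserted form of $\overline{Q}_n$: its first $d(M+1)$ columns lie in $V_1$ and its remaining columns are the ambient basis vectors spanning $V_2$, i.e.\ $\overline{Q}_n=\bigl(\begin{smallmatrix}(\overline{Q}_n)_{11}&0\\0&I\end{smallmatrix}\bigr)$.

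For the base case I would expand the first factorization in \cref{DQRbar}, namely $\overline{Q}_1\overline{R}_0=\overline{T}_0\overline{Q}_0=\bigl(\begin{smallmatrix}A_{11}&A_{12}\\0&0\end{smallmatrix}\bigr)$ with $A_{11}=\overline{\mathbf{T}}_0(\overline{Q}_0)_{11}$ and $A_{12}=\overline{\mathbf{T}}_0(\overline{Q}_0)_{12}$. The hypothesis that $A_{11}$ is invertible makes the first $d(M+1)$ columns a basis of $V_1$, so Gram--Schmidt produces orthonormal $q_1,\dots,q_{d(M+1)}\in V_1$ with strictly positive diagonal entries of $\overline{R}_0$; these already span $V_1$. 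Every later column lies in $V_1$ and hence has vanishing residual, so the corresponding diagonal entries of $\overline{R}_0$ are zero and the matching columns of $\overline{Q}_1$ are forced to be the basis vectors of $V_2$. This gives $\overline{Q}_1=\bigl(\begin{smallmatrix}(\overline{Q}_1)_{11}&0\\0&I\end{smallmatrix}\bigr)$. Note that $\overline{R}_0$ keeps a generally nonzero $(1,2)$ block $(\overline{Q}_1)_{11}^{*}A_{12}$, which is precisely why the statement asserts nothing about $\overline{R}_0$.

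For the inductive step the block form of $\overline{Q}_n$ collapses the product to $\overline{T}_n\overline{Q}_n=\bigl(\begin{smallmatrix}\overline{\mathbf{T}}_n(\overline{Q}_n)_{11}&0\\0&0\end{smallmatrix}\bigr)$: now every column beyond index $d(M+1)$ is identically zero and the first $d(M+1)$ lie in $V_1$. Factorizing the square head block as $\overline{\mathbf{T}}_n(\overline{Q}_n)_{11}=Q_{11}R_{11}$ (ordinary finite-dimensional QR, valid even when the block is singular) yields $d(M+1)$ orthonormal columns inside the $d(M+1)$-dimensional space $V_1$, which therefore form an orthonormal basis of it; the vanishing tail columns again receive the basis vectors of $V_2$. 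Hence $\overline{Q}_{n+1}$ has the claimed block-diagonal form, closing the induction. The statement on $\overline{R}_{n}$ for $n\ge1$ then follows at once from $\overline{R}_n=\overline{Q}_{n+1}^{*}\overline{T}_n\overline{Q}_n$: the vanishing tail columns of $\overline{T}_n\overline{Q}_n$ annihilate the $(1,2)$ and $(2,2)$ blocks, while upper-triangularity annihilates the $(2,1)$ block.

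The one genuinely delicate point, as opposed to bookkeeping, is the non-uniqueness of the QR factorization whenever the triangular factor carries zero diagonal entries --- unavoidable here because of the infinitely many vanishing tail columns (and possibly within $V_1$ when $\overline{\mathbf{T}}_n(\overline{Q}_n)_{11}$ is rank-deficient). The non-negativity convention on the diagonal does not by itself select the $Q$-columns in these degenerate positions, so the argument must be read with the natural completion that extends the established orthonormal head by ambient basis vectors orthogonal to it; under that convention the block structure is exactly what is reproduced. This also isolates the sole role of the invertibility hypothesis on $\overline{\mathbf{T}}_0(\overline{Q}_0)_{11}$: it is needed only to launch the induction, ensuring that the first batch of orthonormal vectors already exhausts $V_1$ so that the tail can be taken inside $V_2$.
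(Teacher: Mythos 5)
The paper does not prove this lemma at all: it is imported verbatim from \cite[Lemma~1]{BredaVanVleck2014}, so there is no in-paper argument to compare against. Your induction is correct and is essentially the standard argument behind the cited result: the form $\overline{T}_n=\bigl(\begin{smallmatrix}\overline{\mathbf{T}}_n&0\\0&0\end{smallmatrix}\bigr)$ forces Gram--Schmidt to exhaust $V_1$ already on the first $d(M+1)$ columns (the only place the invertibility of $\overline{\mathbf{T}}_0(\overline{Q}_0)_{11}$ is used), after which the degenerate tail columns are completed by the ambient basis of $V_2$ and the block structure propagates. Your explicit flagging of the two delicate points --- that the non-negative-diagonal convention does not by itself fix the $Q$-columns at zero diagonal entries, so the block form must be read as the natural choice of completion, and that nothing is (or can be) asserted about $\overline{R}_0$ because of its nonzero $(1,2)$ block --- is exactly the right level of care.
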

\noindent As a consequence, we obtain $(\overline{R}_{n+1})_{11}=\overline{\mathbf{R}}_{n+1}$ and $(\overline{Q}_{n+1})_{11}=\overline{\mathbf{Q}}_{n+1}$, leading to the next theorem.
\begin{theorem}[\protect{\cite[Theorem 2]{BredaVanVleck2014}}]
Let $m\coloneqq d(M+1)$.
The first $m$ Lyapunov intervals in \cref{spLEbar} coincide with those in \cref{spLEbold}, while the remaining ones are clustered at $-\infty$:
\begin{equation*}
[\overline{\alpha}_{i},\overline{\beta}_{i}]=
\begin{cases}
[\overline{\bm{\alpha}}_{i},\overline{\bm{\beta}}_{i}]&\text{for } i\in\{0,\dots,m-1\},\\
\{-\infty\}&\text{for } i\geq m.
\end{cases}
\end{equation*}
\end{theorem}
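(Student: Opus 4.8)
The plan is to extract both claims directly from the block--triangular structure established in the lemma above, reducing the argument to a comparison of the diagonal entries appearing in the definitions \cref{LEbar,LEbold}. First I would recall that, since $h=\tau$, each $\overline{T}_{n}$ is finite-rank with the displayed block form, and apply the preceding lemma --- under the generic hypothesis that $\overline{\mathbf{T}}_{0}(\overline{Q}_{0})_{11}$ be invertible, which holds for almost every choice of the random unitary $\overline{Q}_{0}$ --- to obtain the block forms of $\overline{Q}_{n+1}$ and $\overline{R}_{n+1}$. The key observation is that the $I_{22}$ block, together with the unitarity of $\overline{Q}_{n+1}$, forces $(\overline{Q}_{n+1})_{11}$ to be orthogonal on $X_{M}$; hence the sequence $\{(\overline{Q}_{n})_{11},(\overline{R}_{n})_{11}\}$ is itself a genuine QR sequence, and matching it with \cref{DQRbold} gives the identities $(\overline{R}_{n+1})_{11}=\overline{\mathbf{R}}_{n+1}$ and $(\overline{Q}_{n+1})_{11}=\overline{\mathbf{Q}}_{n+1}$ already recorded above.

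For the indices $i\in\{0,\dots,m-1\}$, which label the upper-left $m\times m$ block ($m=d(M+1)$ being exactly $\dim X_{M}$), this yields $[\overline{R}_{k}]_{ii}=[(\overline{R}_{k})_{11}]_{ii}=[\overline{\mathbf{R}}_{k}]_{ii}$ for every $k\geq 1$. Substituting into \cref{LEbar,LEbold} and noting that the single $k=0$ term contributes $t_{n}^{-1}\log([\overline{R}_{0}]_{ii})\to 0$, hence cannot affect the $\liminf$ and $\limsup$, I would conclude $\overline{\alpha}_{i}=\overline{\bm{\alpha}}_{i}$ and $\overline{\beta}_{i}=\overline{\bm{\beta}}_{i}$, so that the first $m$ intervals of the spectra \cref{spLEbar,spLEbold} coincide. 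Here the invertibility hypothesis is what guarantees $[\overline{R}_{0}]_{ii}>0$ for $i<m$, so that these $k=0$ terms are finite.

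For the indices $i\geq m$, which label the lower-right block, the structure of $\overline{R}_{n+1}$ gives $[\overline{R}_{k}]_{ii}=0$ for every $k\geq 1$. With the convention $\log(0)=-\infty$, each partial sum in \cref{LEbar} with $n\geq 2$ equals $-\infty$, whence $\overline{\alpha}_{i}=\overline{\beta}_{i}=-\infty$ and the interval degenerates to $\{-\infty\}$, as claimed.

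I expect the only genuinely delicate point to be the synchronization of the two QR sequences: one must check that $(\overline{Q}_{n})_{11}$ is orthogonal and that the finite-dimensional factorization \cref{DQRbold} is recovered \emph{exactly} by the $11$-block of \cref{DQRbar}. This rests on the uniqueness of the QR factorization under the non-negative-diagonal convention and on the insensitivity of the asymptotic exponents \cref{LEbar,LEbold} to the initial factor, so that any discrepancy at $k=0$ (where $(\overline{Q}_{0})_{11}$ need not itself be orthogonal) washes out in the limit. The remaining steps are routine bookkeeping with the block decomposition, and the statement then follows exactly as in \cite[Theorem 2]{BredaVanVleck2014}.
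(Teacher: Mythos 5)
Your argument is correct and follows essentially the same route as the paper, which derives the theorem from the preceding block-structure lemma via the identifications $(\overline{R}_{n+1})_{11}=\overline{\mathbf{R}}_{n+1}$ and $(\overline{Q}_{n+1})_{11}=\overline{\mathbf{Q}}_{n+1}$ (citing \cite[Theorem 2]{BredaVanVleck2014} for the details). Your additional care about the $k=0$ term and the orthogonality of $(\overline{Q}_{n})_{11}$ fills in exactly the bookkeeping the paper leaves implicit.
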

\noindent The next final result relates the exact QR sequence \cref{DQRinf} to the finite-rank one \cref{DQRbar} via perturbations of the finite-rank triangular factors in a backward error analysis framework.
These perturbations are explicitly related to the difference $\overline{T}_{n}-T_{n}$, for which the norm convergence is obtained in \cref{sec:conv-op}, see \cref{final-operator-convergence}.
\begin{lemma}[\protect{\cite[Lemma 2]{BredaVanVleck2014}}]
Let $\overline{Q}_{0}=Q_{0}\colon X\to X$ be unitary.
If $\overline{\mathbf{T}}_{0}(\overline{Q}_{0})_{11}$ is invertible, then
\begin{equation*}
Q_{n+1}R_{n}\cdots R_{0}=\overline{Q}_{n+1}[\overline{R}_{n}+E_{n}]\cdots[\overline{R}_{0}+E_{0}]
\end{equation*}
with
\begin{equation}\label{Ek}
E_{k}\coloneqq-\overline{Q}_{k+1}^{\ast}(\overline{T}_{k}-T_{k})\overline{Q}_{k},\quad k\in\{0,\dots,n\}.
\end{equation}
\end{lemma}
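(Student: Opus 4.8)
The plan is to reduce the claimed identity to a telescoping argument resting on a single observation: the perturbed finite-rank factor $\overline{R}_{k}+E_{k}$ is precisely the compression of the \emph{exact} operator $T_{k}$ by the unitary frames $\overline{Q}_{k}$ and $\overline{Q}_{k+1}$. Concretely, since $\overline{Q}_{k+1}$ is unitary, multiplying the finite-rank QR relation~\cref{DQRbar} on the left by $\overline{Q}_{k+1}^{\ast}$ gives $\overline{R}_{k}=\overline{Q}_{k+1}^{\ast}\overline{T}_{k}\overline{Q}_{k}$. Substituting the definition~\cref{Ek} of $E_{k}$ then yields the key identity
\begin{equation*}
\overline{R}_{k}+E_{k}=\overline{Q}_{k+1}^{\ast}\overline{T}_{k}\overline{Q}_{k}-\overline{Q}_{k+1}^{\ast}(\overline{T}_{k}-T_{k})\overline{Q}_{k}=\overline{Q}_{k+1}^{\ast}T_{k}\overline{Q}_{k},
\end{equation*}
valid for every $k\in\{0,\dots,n\}$.

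Next I would telescope the right-hand side of the claim. Forming $\overline{Q}_{n+1}[\overline{R}_{n}+E_{n}]\cdots[\overline{R}_{0}+E_{0}]$ and inserting this identity in each bracket, the leading $\overline{Q}_{n+1}\overline{Q}_{n+1}^{\ast}$ and every interior pair $\overline{Q}_{k}\overline{Q}_{k}^{\ast}$ collapse to $I_{X}$ by unitarity, and what survives is $T_{n}T_{n-1}\cdots T_{0}\overline{Q}_{0}=T_{n}\cdots T_{0}Q_{0}$, using the hypothesis $\overline{Q}_{0}=Q_{0}$. On the other side, the same operator $T_{n}\cdots T_{0}Q_{0}$ arises from the exact QR recursion~\cref{DQRinf}: from $T_{0}Q_{0}=Q_{1}R_{0}$, applying $T_{k}$ and~\cref{DQRinf} repeatedly gives, by a short induction on $n$, the relation $T_{n}\cdots T_{0}Q_{0}=Q_{n+1}R_{n}\cdots R_{0}$. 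Equating the two expressions through their shared value $T_{n}\cdots T_{0}Q_{0}$ delivers the asserted formula. Notably, no invertibility of the triangular factors $R_{k}$ or $\overline{R}_{k}$ enters here, since the telescoping merely moves the operators $T_{k}$ through the unitary frames.

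The hypothesis that $\overline{\mathbf{T}}_{0}(\overline{Q}_{0})_{11}$ be invertible plays no role in the algebra above; its function is to guarantee, via the preceding lemma, that the finite-rank QR sequence~\cref{DQRbar} is genuinely well defined, with unitary factors $\overline{Q}_{k}$ and the block structure used throughout. Without it the factorizations~\cref{DQRbar} together with the uniqueness convention for QR decompositions could fail, and the cancellations $\overline{Q}_{k}\overline{Q}_{k}^{\ast}=I_{X}$ would lose their meaning. Accordingly, I do not anticipate a substantial obstacle: the only genuine care required is bookkeeping, namely verifying that the interior unitary cancellations align correctly across all $n+1$ brackets, which I would make precise by the same induction on $n$ rather than an informal ellipsis. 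The result is, at bottom, an algebraic rearrangement of the two QR recursions around their common endpoint.
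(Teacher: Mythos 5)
Your proof is correct, and it is essentially the argument of the cited source: the paper itself does not reprove this lemma but imports it from \cite[Lemma 2]{BredaVanVleck2014}, where the key step is exactly your observation that $\overline{R}_{k}+E_{k}=\overline{Q}_{k+1}^{\ast}T_{k}\overline{Q}_{k}$, after which both sides telescope to $T_{n}\cdots T_{0}Q_{0}$ by unitarity of the $\overline{Q}_{k}$ (guaranteed, as you note, by the invertibility hypothesis via the preceding lemma).
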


The analysis is now completed by relating the perturbed finite-rank triangular factors to another QR sequence, viz.
\begin{equation*}
\widetilde{Q}_{n+1}\widetilde{R}_{n}=[\overline{R}_{n}+E_{n}]\widetilde{Q}_{n}
\end{equation*}
starting from $\widetilde{Q}_{0}=I_{X}$ and with $\widetilde{Q}_{n}\colon X\to X$ near identity unitary operators for positive $n$.
The existence of the sequence $\{\widetilde{Q}_{n}\}_{n\in\mathbb{N}}$ is first translated into a zero finding problem on a suitable Banach space.
Then the application of a Newton--Kantorovich type argument provides the final bounds on the difference between approximated and exact exponents, i.e., $\abs{\overline{\alpha}_{i}-\alpha_{i}}$ and $\abs{\overline{\beta}_{i}-\beta_{i}}$.
These bounds are shown to depend on the distance of $\widetilde{Q}_{n}$ from $I_{X}$, as well as on norm bounds on the backward errors $E_{k}$ in \cref{Ek} (and hence on $\norm{\overline{T}_{k}-T_{k}}_{X \leftarrow X}$) and, finally, on the magnitude of the diagonal entries $[R_{k}]_{ii}$ of the upper triangular factors (see \cite[Section 6.2 and Proposition 2]{BredaVanVleck2014}).
As this part is rather technical, yet identical to the one in \cite{BredaVanVleck2014}, we omit it and refer to the cited work.
Therein, the interested reader can also find useful comments on the (im)possibility of guaranteeing the approximation of the dominant exponents, in relation to the notion of integral separation and to the role of the chosen coordinate system.
Let us just observe that the above mentioned bounds can be recovered from the computed triangular factors $\overline{\mathbf{R}}_{k}$ and from the convergence of $\norm{\overline{T}_{k}-T_{k}}_{X \leftarrow X}$ from \cref{final-operator-convergence}.
For a general perturbation theory on the approximation of LEs and stability spectra via QR methods for linear operators on Hilbert spaces see \cite{BadawyVanVleck2012}.

\section{Implementation and numerical results}
\label{sec:num}

For the discretization of the evolution operators, we use the MATLAB code \texttt{eigTMNc}, available at \url{https://cdlab.uniud.it/software}, which implements the discretization method of \cite{BredaLiessi2018,BredaLiessi2020}.
That method is based on the pseudospectral collocation of the elements of $X$ on the Chebyshev type II nodes (extrema) and of the elements of $X^+$ on the Chebyshev type I nodes (zeros).
As we observe next, the differences with the method illustrated in \cref{sec:discr} are unimportant as long as we are concerned with the approximation of LEs.
Indeed, representing the state of the system in $X$ via Chebyshev collocation or truncation of the Fourier series in principle leads to different functions.
However, the DQR method, as observed below and after \cref{Tninf}, starts from a random matrix, rendering the precise initial value irrelevant.
Moreover, since $h=\tau$, the only relevant elements of $X^+$ are polynomials of degree at most $N-1$, so collocating at the Chebyshev and the Legendre zeros gives the same polynomial in both cases.

The matrix representation of the operator $\mathbf{T}_{M,N}$ is based on the following reformulation.
Recalling \cref{TMNbold}, by virtue of~\cref{decomp_V}, we can write
\begin{equation*}
\mathbf{T}_{M, N} \Phi = \mathbf{T}_{M}^{(1)} \Phi + \mathbf{T}_{M, N}^{(2)} W^{\ast},
\end{equation*}
with $\mathbf{T}_{M}^{(1)} \colon X_{M} \to X_{M}$ and $\mathbf{T}_{M, N}^{(2)} \colon X_{N}^{+} \to X_{M}$ defined as
\begin{equation*}
\mathbf{T}_{M}^{(1)} \Phi \coloneqq R_{M} (V^{-} P_{M} \Phi)_{h}, \quad\quad
\mathbf{T}_{M, N}^{(2)} W \coloneqq R_{M} (V^{+} P_{N}^{+} W)_{h}.
\end{equation*}
Similarly, the fixed point equation~\cref{discrete_FP} can be rewritten as
\begin{equation*}
(I_{X_{N}^{+}} - \mathbf{U}_{N}^{(2)}) W = \mathbf{U}_{M, N}^{(1)} \Phi,
\end{equation*}
with $\mathbf{U}_{M, N}^{(1)} \colon X_{M} \to X_{N}^{+}$ and $\mathbf{U}_{N}^{(2)} \colon X_{N}^{+} \to X_{N}^{+}$ defined as
\begin{equation*}
\mathbf{U}_{M, N}^{(1)} \Phi \coloneqq R_{N}^{+} \mathcal{F} V^{-} P_{M} \Phi, \quad\quad
\mathbf{U}_{N}^{(2)} W \coloneqq R_{N}^{+} \mathcal{F} V^{+} P_{N}^{+} W.
\end{equation*}
Since $I_{X_{N}^{+}} - U_{N}^{(2)}$ is invertible, the operator $\mathbf{T}_{M, N} \colon X_{M} \to X_{M}$ can be eventually reformulated as
\begin{equation*}
\mathbf{T}_{M, N} = \mathbf{T}_{M}^{(1)} + \mathbf{T}_{M, N}^{(2)} (I_{X_{N}^{+}} - \mathbf{U}_{N}^{(2)})^{-1} \mathbf{U}_{M, N}^{(1)}.
\end{equation*}
Observe that the matrices $\mathbf{T}_{M}^{(1)}$ and $\mathbf{T}_{M, N}^{(2)}$ do not depend on the right-hand side of \cref{RE}.

For the approximation of the LEs we apply the DQR method to the evolution operators defined on a uniform grid of step size $\tau$ on the time interval $[\tau,t_{\text{f}}]$, where $t_{\text{f}}\geq\tau$ is the final time for the truncation of the limits in \cref{LEbold}.
The approximating sequence starts from a random unitary matrix.
At each step the evolution operator is applied to the unitary matrix and the QR factorization of the resulting matrix is computed: the R factor contributes to the approximation of the LEs via \cref{LEbold}, while the Q factor becomes the unitary matrix for the next step.

The MATLAB codes implementing the (full) method and the scripts reproducing the experiments presented below are available at \url{https://cdlab.uniud.it/software}.

\bigskip

Let us recall that the focus of this work is on the convergence proof and on the relevant theoretical background.
The outcome of the experiments performed with the current method are the same as those obtained with the method of \cite{BredaLiessi2024} for all the test cases considered therein.
Therefore, to avoid repetition, we limit ourselves to report only on the results regarding the RE with quadratic nonlinearity from \cite{BredaDiekmannLiessiScarabel2016}:
\begin{equation}\label{quadRE}
x(t) = \frac{\gamma}{2}\int_{-3}^{-1} x(t+\theta)(1-x(t+\theta)) \D\theta.
\end{equation}
For details on the properties of the equation we refer to \cite[Section 5.1]{BredaLiessi2024}.
As done there, as reference values for the LEs, we use the spectra of the evolution operators as described therein, approximating them here with \texttt{eigTMNc}.

\Cref{quadRE} is nonlinear and in order to compute the LEs we need to linearize it around a (generic) solution.
We compute this solution using the trapezoidal method described in \cite{MessinaRussoVecchio2008}, using $r=40$ pieces per unit time in the numerical solution, starting from a constant initial value of $0.1$.
As for the linearization, observe that the right-hand side of \cref{quadRE} is not Fr\'echet-differentiable as a map from $L^2$ to $\mathbb{R}$.
However, we can consider the linear RE
\begin{equation*}
x(t) = \frac{\gamma}{2} \int_{-3}^{-1} (1-2\bar{x}(t+\theta)) x(t+\theta) \D\theta.
\end{equation*}
We refer to \cite{BredaLiessiVermiglio2022} and to \cite[Section 3.5]{DiekmannGettoGyllenberg2008} for more details about linearizations in the case of REs formulated on an $L^1$ state space.
Note that the same problems arise in the case of any $L^p$ state space.

The first experiment investigates the dependence of the errors on the LEs on the choice of $M=N$ and of the final time $t_{\text{f}}$.
As in \cite{BredaLiessi2024}, we choose values of $\gamma$ corresponding to the stable trivial equilibrium ($\gamma=0.5$), the stable nontrivial equilibrium ($\gamma=3$) and the stable periodic orbit ($\gamma=4$).
The reference values are obtained by using \texttt{eigTMNc} with the default options and $120$ as the degree of the collocation polynomials (fixed independently of $M=N$).
We obtain qualitatively the same results as in \cite{BredaLiessi2024}, namely that the LEs converge linearly in $t_{\text{f}}$ for fixed $M=N$ (\cref{fig:quad-le-T-1000}), and that no convergence is observed in $M=N$ for fixed $t_{\text{f}}$ (\cref{fig:quad-le-M}).
We restate here that our interpretation of the latter phenomenon is that we are observing an error barrier due to the truncation of the time limits in \cref{LEbold}, and apparently that truncation error dominates the error due to the discretization of the evolution operators.

\begin{figure}
\begin{center}
\includegraphics{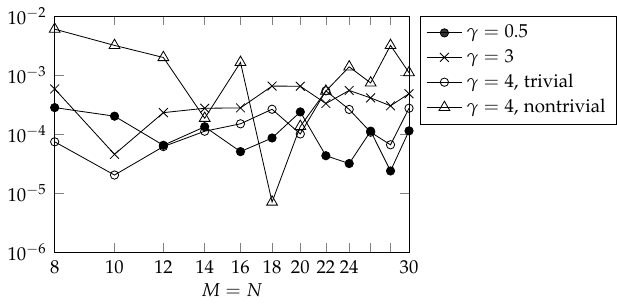}
\caption{Absolute errors on the dominant LEs of the RE with quadratic nonlinearity \cref{quadRE} for values of $\gamma$ corresponding to the stable trivial equilibrium ($\gamma=0.5$), the stable nontrivial equilibrium ($\gamma=3$) and the stable periodic orbit ($\gamma=4$).
For the last one, both the trivial and the dominant nontrivial exponents are shown.
The errors are measured with respect to the exponents computed via \texttt{eigTMNc}.
The final time is $t_{\text{f}}=1000$.}
\label{fig:quad-le-M}
\end{center}
\end{figure}

\begin{figure}
\begin{center}
\includegraphics{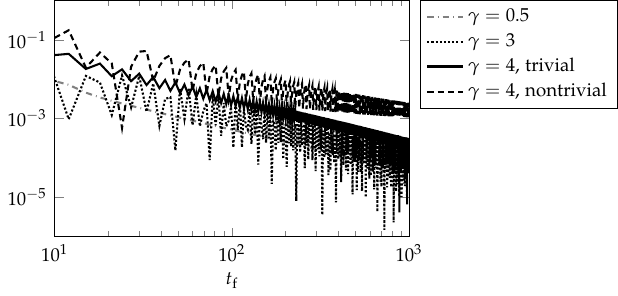}
\caption{Absolute errors on the dominant LEs of the RE with quadratic nonlinearity \cref{quadRE} for values of $\gamma$ corresponding to the stable trivial equilibrium ($\gamma=0.5$), the stable nontrivial equilibrium ($\gamma=3$) and the stable periodic orbit ($\gamma=4$).
For the last one, both the trivial and the dominant nontrivial exponents are shown.
The errors are measured with respect to the exponents computed via \texttt{eigTMNc}.
The exponents are computed for $M=N=16$.}
\label{fig:quad-le-T-1000}
\end{center}
\end{figure}

The second experiment consists in the computation of the diagram of the first two dominant LEs of \cref{quadRE} when varying $\gamma$, shown in \cref{fig:quad-le-diagram} (cf.\ \cite[Figure 2.3]{BredaDiekmannLiessiScarabel2016} and \cite[Figure 4]{BredaLiessi2024}).
The LEs are computed with $M=N=15$ and $t_{\text{f}}=1000$.
The diagram is very close to the ones obtained in the previous cited works.
In particular, we can observe the Hopf bifurcation at $\gamma=2+\frac{\pi}{2}$, the period-doubling bifurcations at $\gamma\approx 4.33, 4.50, 4.53$ (cf.\ $\gamma\approx 4.32, 4.49, 4.53$ in \cite{BredaLiessi2024}), and the insurgence of chaos at $\gamma\geq4.55$ (as in \cite{BredaLiessi2024}).
We also observe a stability island (of stable periodic solutions) starting at $\gamma\approx4.8665$ (as in \cite{BredaLiessi2024}) and the period-doubling bifurcations at $\gamma\approx4.8800,4.8865$ (cf.\ $\gamma\approx4.8795,4.8860$ in \cite{BredaLiessi2024}) leading back to chaos starting at $\gamma\approx4.8880$ (cf.\ $\gamma\approx4.8885$ in \cite{BredaLiessi2024}).

\begin{figure}
\begin{center}
\includegraphics{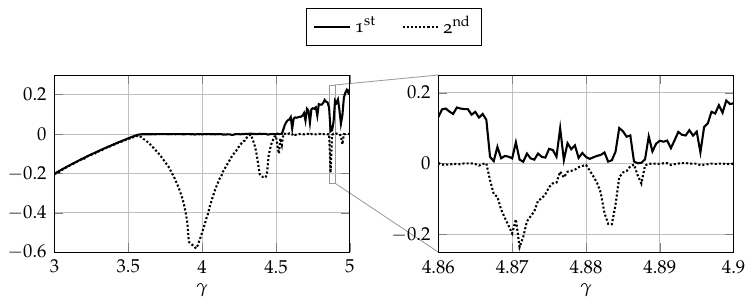}
\caption{Diagram of the first two dominant (in descending order) LEs of the RE with quadratic nonlinearity \cref{quadRE} when varying $\gamma$, computed with $M=N=15$ and $t_{\text{f}}=1000$.}
\label{fig:quad-le-diagram}
\end{center}
\end{figure}

\section{Concluding remarks}
\label{sec:concl}

The method we used to approximate the evolution operators of REs has been adapted also to systems of coupled DDEs and REs in \cite{BredaLiessi2020} and the \texttt{eigTMNc} implementation can be readily applied to them as well.
In fact, in our previous work on LEs \cite{BredaLiessi2024} we provided an example of a coupled system consisting in a simplified version of the Daphnia model with a logistic growth of the resource \cite{BredaDiekmannMasetVermiglio2013}.

In principle, the structure of the method we presented here should work equally well for coupled systems.
However, from the point of view of the theoretical foundations of the method, one should prove the compactness of evolution operators of coupled systems, which may present some difficulties arising from the coupling (cf. \cite{BredaLiessi2020} for the case of Floquet theory).
We thus reserve this topic for future work, with the strong motivation that examples of coupled systems are widely used in modeling population dynamics: besides the simplified Daphnia model cited above and the very complex original Daphnia model \cite{DiekmannGyllenbergMetzNakaokaDeRoos2010}, we mention \cite{RipollFont2023,LythgoePellisFraser2013}, but other examples can be found in \cite{DiekmannGettoGyllenberg2008,DiekmannGyllenberg2012} and the references therein.

\section*{Acknowledgments}

The authors are members of INdAM research group GNCS and of UMI research group ``Mo\-del\-li\-sti\-ca socio-epidemiologica''.
This work was partially supported by the Italian Ministry of University and Research (MUR) through the PRIN 2020 project (No.\ 2020JLWP23) ``Integrated Mathematical Approaches to Socio-Epidemiological Dynamics'', Unit of Udine (CUP G25F22000430006), and by the GNCS 2023 project ``Sistemi dinamici e modelli di evoluzione: tecniche funzionali, analisi qualitativa e metodi numerici'' (CUP: E53C22001930001).

{\sloppy
\printbibliography
\par}

\end{document}